\newcommand{\N}{\mathbb N}
\newcommand{\B}{\mathcal B}
\newcommand{\E}{\mathbb E}
\newcommand{\VAR}{\mathbb{V}\text{ar}}
\newcommand{\VARd}[1]{\VAR\!\left(#1\right)}
\newcommand{\F}{\mathcal F}
\newcommand{\R}{\mathbb R}
\renewcommand{\S}{\mathcal S}
\newcommand{\U}{\mathcal U}
\newcommand{\V}{\mathcal V}
\newcommand{\X}{\mathcal X}
\renewcommand{\em}{\itshape}
\newcommand{\prodm}{\mu_{\otimes}}
\newcommand{\one}{\mathbf{1}}
\newtheorem{thm}{Theorem}
\newtheorem{lem}[thm]{Lemma}
\newtheorem{rem}[thm]{Remark}
\newtheorem{cond}[thm]{Condition}
\DeclareMathAlphabet{\mathpzc}{OT1}{pzc}{m}{it}
\newcommand{\x}{\mathbf x}
\renewcommand{\hat}{\widehat}
\newcommand{\hhat}[1]{\hat{#1}^*}
\renewcommand{\tilde}{\widetilde}
\title{Estimating the Mixing Coefficients of Geometrically Ergodic Markov Processes}
\author{
  Steffen Gr\"unew\"alder\thanks{Department of Mathematics, University of York, York, UK. 
  Email: steffen.grunewalder@york.ac.uk}
  \and
  Azadeh Khaleghi\thanks{ENSAE - CREST, Institut Polytechnique de Paris, Palaiseau, France. 
  Email: azadeh.khaleghi@ensae.fr}
}
\date{} 
\begin{document}

\maketitle

\begin{abstract}
We propose methods to estimate the $\beta$-mixing coefficients of a real-valued geometrically ergodic Markov process from a single sample-path $X_0,X_1, \dots,X_{n-1}$. Under standard smoothness conditions on the densities, namely,  that
the joint density of the pair $(X_0,X_m)$ for each $m$ lies in a Besov space $B^s_{1,\infty}(\R^2)$ for some known $s>0$, we obtain a rate of convergence of order $\mathcal{O}(\log(n) n^{-[s]/(2[s]+2)})$ for the expected error of our estimator in this case; we use $[s]$ to denote the integer part of the decomposition $s=[s]+\{s\}$ of $s \in (0,\infty)$ into  an integer term and a {\em strictly positive} remainder term $\{s\} \in (0,1]$. We complement this result with a high-probability bound on the estimation error, and further obtain analogues of these bounds in the case where the state-space is finite. Naturally no density assumptions are required in this setting; the expected error rate is shown to be of order $\mathcal O(\log(n) n^{-1/2})$. The theoretical results are complemented with empirical evaluations.
\end{abstract}

\noindent%
{\it Keywords:}  $\beta$-mixing, estimation, geometric ergodicity, Markov process, consistency

\section{\sc \bfseries Introduction}\label{sec:intro}
Temporal dependence in time-series can be quantified via various notions of {\em mixing}, which capture how events separated over time may depend on one another. The dependence between the successive observations in a stationary sequence implies that the sequence contains less {\em information} as compared to an i.i.d. sequence with the same marginal distribution. This can negatively affect the statistical guarantees for dependent samples. In fact, various mixing coefficients 
explicitly appear in the concentration inequalities involving dependent and functions of dependent sequences, making them looser than their counterparts derived for i.i.d. samples, see, e.g.
\cite{IBR62,viennet1997inequalities, RIO99,SAM00, RIO2000905,BER06, bradley2007introduction,kontorovich2008concentration,BOSQ12} 
for a non-exhaustive list of such results. Thus, in order to be able to use these inequalities in finite-time analysis, one is often required to assume known bounds on the mixing coefficients of the processes. Moreover, the quality of the assumed upper-bounds on the mixing coefficients directly translates to the strength of the statistical guarantees involving the sequence at hand. Therefore, one way to obtain strong statistical guarantees for dependent data, is to first estimate the mixing coefficients from the samples, and then plug in the estimates (as opposed to the pessimistic upper-bounds) in the appropriate concentration inequalities. Estimating the mixing coefficients can more generally lead to a better understanding of the dependence structure in the sequence. 

In this paper, we study
  the problem of estimating the $\beta$-mixing coefficients of a real-valued Markov chain from a finite sample-path, in the case where the process is stationary and geometrically ergodic. A stationary, geometrically ergodic Markov process is absolutely regular with $\beta(m)\to 0$ {\em at least exponentially fast} as $m \to 0$. 
We start by recalling the relevant concepts.

\textbf{$\beta$-mixing coefficients.} 
Let $(\Omega, \F, \mu)$  be a probability space. The $\beta$-dependence $\beta(\U,\V)$ between the $\sigma$-subalgebras $\U$ and $\V$ of $\F$ is defined as follows. Let $\iota(\omega) \mapsto (\omega,\omega)$ be the injection map from $(\Omega,\F)$ to  $(\Omega \times \Omega,\U \otimes \V)$, where $\U \otimes \V$ is the product sigma algebra generated by $\U \times \V$.
 Let $\prodm$ be the probability measure defined on  $(\Omega \times \Omega,\U \otimes \V)$ obtained as the pushforward measure of  $\mu$ under $\iota$. Let $\mu_{\U}$ and $\mu_{\V}$ denote the restrictions of $\mu$ to $\U$ and $\V$ respectively. Then
\begin{align}\label{eq:beta-dep}
\beta(\U,\V)&:=\sup_{W \in \U \otimes \V} |\prodm(W) - \mu_{\U}\times \mu_{\V}(W)|
\end{align}
where $\mu_{\U}\times \mu_{\V}$ is the product measure on $(\Omega \times \Omega,\U \otimes \V)$ obtained from $\mu_{\U}$ and $\mu_{\V}$. 
This 
leads to the sequence $\boldsymbol{\beta}:=\langle\beta(m)\rangle_{m \in \N}$ of $\beta$-mixing coefficients of a process ${\bf X}$, where 
\begin{equation*}
\beta(m):=\sup_{j \in \N}\beta(\sigma(\{X_{t}: 0\leq t \leq j-1\}),\sigma(\{X_{t}: t \geq j+m\})).
\end{equation*}
A stochastic process is said to be $\beta$-mixing  or absolutely regular if $\lim_{m \rightarrow \infty}\beta(m)=0$.

\textbf{Geometrically ergodic Markov processes.} In this paper, we are concerned with real-valued stationary Markov processes that are {\em geometrically ergodic}. Recall that a process is stationary if for every $m,\ell \in \N$ the marginal distribution on $\R^m$ of $(X_{\ell},\dots, X_{m+\ell})$ is the same as that of $(X_0,\dots, X_m)$. In the case of stationary Markov processes, the $\beta$-mixing coefficient $\beta(m),~m \in \N$ can be simplified to the $\beta$-dependence between the $\sigma$-algebras generated by  $X_0$ and $X_{m}$ respectively, i.e. 
\begin{equation*}
\beta(m) = \beta(\sigma(X_0),\sigma(X_{m}))
\end{equation*}
\cite[vol. 1 pp. 206]{bradley2007introduction}. A stationary Markov process is said to satisfy ``geometric ergodicity'' if there exists Borel functions $f:\R \to (0,\infty)$ and $c:\R\to (0,\infty)$ such that for $\rho$-a.e. $x \in \R$ and every $m \in \N$, it holds that 
\begin{equation*}
    \sup_{B \in \B(\R)}|p_m(x,B)-\rho(B)|  \leq f(x)e^{-c(x)m}
\end{equation*} where $p_m(x,B)$ defined for $x \in \mathbb R$ and $B \in \B(\R)$ is the regular conditional probability distribution of $X_m$ given $X_0$ and $\rho$ denotes the marginal distribution of $X_0$ \cite[vol. 2 Definition 21.18 pp 325]{bradley2007introduction}. It is well-known - see, e.g. \cite[vol. 2 Theorem 21.19 pp. 325]{bradley2007introduction} -  that a stationary, geometrically ergodic Markov process is absolutely regular with $\beta(m)\to 0$ {\em at least exponentially fast} as $m \to 0$. This means that in this case the  $\beta$-mixing coefficients of the process satisfy 
\begin{equation}\label{eq:betambound}
\beta(m) \leq \eta e^{-\gamma m}
\end{equation}
for some $\eta,~\gamma \in (0,\infty)$ and all $m \in \N$.

\textbf{Overview of the main results.} Our first result involves the estimation of $\beta(m)$ for each $m = 1,2,\dots$, of a real-valued geometrically ergodic Markov process from a finite sample path $X_0,X_1, \dots,X_n$. Our main assumption in this case is that the joint density $f_m$ of the pair $(X_0,X_m)$ lies in a Besov space $B^s_{1,\infty}(\R^2)$ ; roughly speaking, this implies that $f_m$ has $[s]$ many weak derivatives. As discussed above, for a geometrically ergodic Markov process, there exists some $\eta, \gamma >0$ such that $\beta(m) \leq \eta e^{-{\gamma} m}$. Given some $\eta,~\gamma \in (0,\infty)$ we show in Theorem~\ref{lem:expected_beta_error} that 
\begin{align*}
\E |\beta(m)-\hat{\beta}_N(m)| &\in \mathcal{O}
(\log(n) n^{-\frac{[s]}{2[s]+2}}),~\text{for all $m \lesssim (\log n)/\gamma$}
\end{align*} 
where, $\hat{\beta}_N$ is given by \eqref{eq:beta_m_n} with $N \approx \gamma n/\log n$ (given in Condition~\ref{ass}). 
Moreover, there exists a constant $\zeta > 0$ such that with probability $
1 - \zeta \log(n) n^{-\frac{[s]}{2[s] + 2}}$ it holds that
\[
|\beta(m)-\hat{\beta}_N(m)| \in \mathcal{O}(\log^2(n) n^{-\frac{[s]}{2[s]+2}}).
\]
The constants hidden in the $\mathcal{O}$-notation are included in the full statement of the theorem. An important observation is that neither $\eta$ nor $\gamma$ affect the rate of convergence. However, a factor $1/\gamma$ appears in the constant, and a pessimistic upper-bound on the mixing coefficients (i.e. a small $\gamma$) can lead to a large constant in the bound on the estimation error.    
Theorems~\ref{prop:beta_d} and \ref{prop:beta_dvc} are concerned with a different setting where the state-space of the Markov processes is finite. In this case, we 
do not require any assumptions on the smoothness of the densities, and the rates obtained here match that provided in Theorem~\ref{lem:expected_beta_error} if we let $s\to \infty$. 
For the estimate $\hat{\beta}_N(m)$ given by \eqref{eq:betahat_d} with $N\approx (\log n)/\gamma$ and every 
 $m \lesssim (\log n)/\gamma$ we have,
\begin{align}\label{bound_exp_d}
\E|\hat{\beta}_{N}(m)-\beta(m)|\lesssim \frac{|\X|}{\gamma}\log(n) n^{-1/2}.
\end{align}
Moreover, 
\begin{equation}
\Pr(|\hat{\beta}_{N}(m)-\beta(m)| \geq \epsilon ) \lesssim|\X| n^{-1/2} + |\X|^2\exp\left(-\frac{\gamma n\epsilon^2}{|\X|^4 \log n}\right)
\end{equation}
 for $\epsilon >0$.
We refer to the statement of Theorem~\ref{prop:beta_d} for the explicit constants. Observe that we have a factor $|\X|/\gamma$ in \eqref{bound_exp_d}. In other words, $\gamma$ has the same effect here as in the bound provided in Theorem~\ref{lem:expected_beta_error}, and the constant increases linearly in the size of the state-space.
In this setting, apart from estimating the individual mixing coefficients we can also simultaneously estimate $\beta(m)$ for $m$ up to some $k^\dagger \lesssim (\log n)/\gamma$ (see the full statement of Theorem~\ref{prop:beta_dvc} for a specification). 
The analysis relies on a VC-argument in place of a union bound which leads to tighter error bounds in this case. 
We obtain,
\begin{align*}
\E(\sup_{m \leq k^{\dagger}}|\hat{\beta}_{N}(m)-\beta(m)|)\lesssim \frac{1}{\gamma} \log (n|\X|)|\X|^2 n^{-1/2},
\end{align*}
and, 
\[\Pr(\sup_{m \leq k^{\dagger}}|\hat{\beta}_{N}(m)-\beta(m)| \geq \epsilon ) \lesssim |\X|^2 \log(n|\X|)n^{-1/2}
+|\X|^2 \exp\left(-\frac{\gamma n\epsilon^2}{|\X|^4\log n}\right)\]
for all $\epsilon >0$. Note that the parameter $k^{\dagger}$ does not explicitly appear on the right hand side of the above bounds as it has already been substituted for in the calculation. 

In our empirical evaluations provided in Section~\ref{sec:exp}, we highlight one of the key features of our estimators: their model-agnostic nature, which enables them to function effectively without prior knowledge of the underlying time-series models, requiring only assumptions on the smoothness of densities and loose bounds on the mixing rate. We specifically demonstrate that a natural model-specific $\beta$-mixing estimator, designed for {\em autoregressive AR} process, suffers performance drop in the presence of model-mismatch. In contrast to our estimator, this model-specific estimator exhibits bias and fails to consistently estimate the mixing coefficient, highlighting its vulnerability to inaccuracies when the underlying model is not correctly specified.

\textbf{Related literature.}
Research on the direct estimation of the mixing coefficients is relatively scarce. In the asymptotic regime, \cite{NOB06} used hypothesis testing to give asymptotically consistent estimates of the polynomial decay rates for covariance-based mixing conditions. \cite{KL23} proposed 
asymptotically consistent estimators of the $\alpha$-mixing and $\beta$-mixing coefficients of a stationary ergodic process from a finite sample-path. Since in general, rates of convergence are non-existent for stationary ergodic processes (see, e.g. \cite{sheilds96}), their results necessarily remain asymptotic and no rates of convergence can be obtained. 
An attempt at estimating $\beta$-mixing coefficients has also been made by \cite{MCD15}. Despite our best attempts, we have been unable to verify some of their main claims and have particular reservations about the validity of their rates. More specifically, their main  theorem (Theorem 4) suggests a rate of convergence of order $\log(n)n^{-1/2}$ for their estimator, independently of the dimension of the state-space and under the most minimal smoothness assumptions on the densities. Given that under these conditions a density estimator is known to have a dimension-dependent rate of about $n^{-1/(2+d)}$ even when the samples are iid \cite[pp. 404]{nickl}, it is highly unlikely that  a dimension-independent rate would be achievable for an estimator of the $\beta$-mixing coefficient. An interesting body of work exists for a different, yet related problem, concerning the estimation of the mixing times of finite-state Markov processes \cite{Peres19, Kont19,Wolfer20}. 
Indeed, in their recent work, \cite{wolfer2024optimistic} strategically exploit the relationship between the mixing time and the $\beta$-mixing coefficients of an ergodic Markov process with a discrete alphabet, and use this connection to construct estimators for the average mixing time of the process. Moreover, they propose an approach that reduces the dependence on the size of the state space in the discrete-alphabet setting. This in turn, enables them to handle certain countably infinite state-space chains and to estimate the corresponding mixing coefficients.

\textbf{Organization.} The remainder of the paper is organized as follows. In Section~\ref{sec:pre} we introduce some preliminary notation and basic definitions. After formulating the problem in Section~\ref{sec:prob}, we propose our estimators in Section~\ref{sec:res}, considering real-valued as well as we finite-valued sample-paths; the technical proofs are deferred to Section~\ref{sec:proofs}. In Section~\ref{sec:exp}, we present empirical evaluations of our estimators, focusing on their model-agnostic nature, to support the primary theoretical results.
In Section~\ref{sec:conc} we provide concluding remarks and explore potential future research directions.

\section{\sc \bfseries Preliminaries}
\label{sec:pre}
In this section we introduce notation and provide some basic definitions. We denote the non-negative integers by $\N:=\{0,1,2,\dots\}$. If $\mathcal U$ and $\mathcal V$ are two $\sigma$-algebras, we will occasionally use standard measure-theoretic notation as follows: their product $\sigma$-algebra is denoted by 
$\mathcal U \otimes \mathcal V := \sigma(\mathcal U \times \mathcal V)$, 
and the $\sigma$-algebra generated by their union is denoted by 
$\mathcal U \vee \mathcal V := \sigma(\mathcal U \cup \mathcal V)$.
If $s\in (0,\infty)$, then we let $s=[s]+\{s\}$ be decomposed into its integer part $[s] \in \N$ and a {\em strictly positive} remainder term $\{s\} \in (0,1]$. In particular, if $s=i$ for any $i > 0 \in \N$ then, $[s]=i-1$ and $\{s\}=1$. 
As part of our analysis in Section~\ref{subsec:real}, we impose classical density  assumptions on certain finite-dimensional marginals of the Markov processes considered. The densities satisfy standard smoothness conditions as controlled by the parameters of appropriate Besov spaces.

\textbf{Besov Spaces $B_{p,\infty}^s(\R^d)$.}  
For an arbitrary function $f:\R^d \to \R$ and any vector $h \in \R^d$ let $\varDelta_h f(x):=f(x+h)-f(x)$ be the first-difference operator, and obtain higher-order differences inductively by $\varDelta_h^r f:=(\varDelta_h \circ \varDelta_h^{r-1})f$ for $r=2,3,\dots$\,. Denote by $L_p(\R^d),~p \geq 1$ the $L_p$ space of functions $f: \R^d \to \R$.
For $s>0$ the Besov space $B_{p,\infty}^s(\R^d)$ is defined as
\begin{align}\label{eq:defn:besov_space}
B_{p,\infty}^s(\R^d) := \{f \in L_p(\R^d): \|f\|_{B_{p,\infty}^s(\R^d)} < \infty\}
\end{align}
with Besov norm
$ 
\|f\|_{B_{p,\infty}^s(\R^d)}:=\|f\|_1+{{\sup_{0<t<\infty}}} t^{-s}\sup_{|h| \leq t}\|\varDelta_h^r f\|_1 
$ 
where $|v|:=\sum_{i=1}^d |v_i|$ for $v =(v_1,\dots,v_d)\in \R^d$, and $r$ is any integer such that $r > s$ \cite{bennett1988interpolation}. 
Denote by $W_p^{r}(\R^d),~r \in \N$ the Sobolev space  of functions  $f:\R^d \to \R$.
We rely on the following interpolation result concerning $W_p^r(\R^d)$ and $B_{p,\infty}^s(\R^d)$.
\begin{rem}\label{rem}
As follows from \cite[Proposition 5.1.8 and Theorem 5.4.14]{bennett1988interpolation} 
for any $r_0,~r_1 \in \N$ and 
$\varsigma := (1-\theta)r_0+\theta r_1,~\theta\in(0,1)$ 
it holds that 
\begin{align}\label{eq:interpol}
W_p^{r_0}(\R^d) \cap W_p^{r_1}(\R^d) \hookrightarrow B_{p,\infty}^{\varsigma} (\R^d)\hookrightarrow W_p^{r_0}(\R^d) + W_p^{r_1}(\R^d).
\end{align}
If $r_0 < r_1$, then the left and right hand sides of \eqref{eq:interpol} reduce to $W_p^{r_1}(\R^d)$ and $W_p^{r_0}(\R^d)$ respectively.  Furthermore, in this case we have $\|f\|_{B_{p,\infty}^{\varsigma}(\R^d)} \geq \|f\|_{W_p^{r_0}(\R^d)} $. To see this consider the $K$-functional \[K(f,t;W_p^{r_0}(\R^d),W_p^{r_1}(\R^d)):= \inf\{\|f_0\|_{W_p^{r_0}(\R^d)}+t\|f_1\|_{W_p^{r_1}(\R^d)}:f=f_0+f_1\}\] and observe that by \cite[Theorem 5.4.14 and Definition 5.1.7]{bennett1988interpolation} we have
\begin{align*}
\|f\|_{B_{p,\infty}^{\varsigma}(\R^d)} &=\sup_{t>0}t^{-\theta}K(f,t;W_p^{r_0}(\R^d),W_p^{r_1}(\R^d)) \\&\geq K(f,1;W_p^{r_0}(\R^d),W_p^{r_1}(\R^d)) \\ &\geq \inf\{\|f_0\|_{W_p^{r_0}(\R^d)}+\|f_1\|_{W_p^{r_0}(\R^d)}:f=f_0+f_1\}\\ &\geq  \|f\|_{W_p^{r_0}(\R^d)}
\end{align*}
 In particular, consider the Besov space $B_{1,\infty}^s(\R^d)$ for some $s \in (0,\infty)$ and take $\theta=\{s\}/2$, $r_0=[s]$ and $r_1=[s]+2$ for the above convex combination. Then $\|f\|_{B_{1,\infty}^s(\R^d)} \geq \|f\|_{W_{1}^{[s]}(\R^d)}$. 
\end{rem} 
\section{\sc \bfseries Problem Formulation}\label{sec:prob}
We are given a sample $X_0,X_1,\dots,X_{n}$ generated by a stationary geometrically ergodic Markov process taking values in some $\X \subseteq \mathbb R$. As discussed in Section~\ref{sec:pre} such a process is known to have a sequence of $\beta$-mixing coefficients such that 
\[
    \beta(m) \leq \eta e^{-\gamma m},~m \in \N
\]
for some unknown constants $\eta,~\gamma \in (0,\infty)$. The mixing coefficient $\beta(m)$ is unknown, and it is our objective to estimate it. In Section~\ref{sec:res} we introduce our estimators together with theoretical bounds on the estimation error. In Section~\ref{sec:exp} we demonstrate the performance of the proposed estimators on samples generated by standard time-series models. 
\section{\sc \bfseries Theoretical Results}\label{sec:res}
In this section, we present our estimators for $\beta(m)$. We consider two different settings, depending on the state-space. First, in Section~\ref{subsec:real}, we consider the case where the process is real-valued and its one and two-dimensional marginals have densities with respect to the Lebesgue measure. Next, in Section~\ref{subsec:finite}, we consider the case where the state-space $\X$ is finite. In this setting we do not require any density assumptions, and are able to control the estimation error simultaneously for multiple values of $m$. This is stated in Theorem~\ref{prop:beta_dvc}.
The following notation is used in both settings. 
For each $m \in \N$, denote by $P_m$ the joint distribution of the pair $(X_0,X_{m})$ so that for each $U \in \B(\X^2)$ we have \[\Pr(\{(X_0,X_{m}) \in U\}) = P_m(U).\] 
By stationarity, \[\Pr(\{(X_{t},X_{m+t}) \in U\}) = P_m(U),~t \in {\N}.\] 

\subsection{Real-valued state-space}\label{subsec:real} 
We start by considering the case where $\X$ is any subset of $\R$. We assume that $P_m$ has a density $f_m: \R^2 \to \R^2$ with respect to the Lebesgue measure $ \lambda_2$ on $\R^2$. Denote by $P_0$ the marginal distribution of $X_t,~t \in \N$ whose density $f_0: \R \to \R$ with respect to the Lebesgue measure $\lambda$ on $\R$ can be obtained as 
$f_0(x) = \int_{\R} f_m((x,y))d\lambda(y)$ for $x \in \R$.
It follows that 
\begin{equation*}
    \beta(m) = \frac{1}{2}\int_{\R^2}|f_m - f_0 \otimes f_0| d\lambda_2.
\end{equation*}
For some fixed $k \in m,m+1,\dots,\lfloor n/8 \rfloor$ let $N=N(k,n):=\lfloor \frac{n-k}{2(k+1)} \rfloor$ and define the sequence of tuples 
\[Z_i = ( X_{2i(k+1)},X_{2i(k+1)+m}),~i =0,1,2,\dots, N.\] 
Define the Kernel Density Estimator (KDE) of $f_{m}$ as
\begin{align}\label{eq:kde}
\hat{f}_{m,N}(z) = \frac{1}{N h_N^2}\sum_{i=0}^{N-1}K\left (\frac{z-Z_i}{h_N}\right)
\end{align}
with kernel  $K: \R^2 \to \R$ and bandwidth $h_N >0$.
Marginalizing we  obtain an  empirical estimate of $f_0$, i.e. $\hat{f}_{0,N}(x):=\int_{\R} \hat{f}_{m,N}(x,y) d\lambda(y)$. 
We define an estimator of $\beta(m)$ as
\begin{equation}\label{eq:beta_m_n}
\widehat{\beta}_{N}(m)=\frac{1}{2}\int_{\R^2}|\hat{f}_{m,N}-\hat{f}_{0,N}\otimes\hat{f}_{0,N}|d\lambda_2
\end{equation}
where $\otimes$ denotes the tensor product. Note that to simplify notation in \eqref{eq:kde} and \eqref{eq:beta_m_n}, we have omitted the dependence of $N$ on the choice $k$. An optimal value for $k$, denoted by $k^{\star}$, is provided in Condition~\ref{ass}.  
In our analysis we make standard assumptions (see, Condition~\ref{ass} below) about the smoothness of $f_m$ and the order of the kernel $K$ in \eqref{eq:kde}. Recall that a bivariate kernel is said to be of order $\ell$ if 
\begin{enumerate}
\item $c_{\ell}(K):=\displaystyle \sum_{\substack{i,j\in \N \\i+j=\ell}}\displaystyle{\int_{z=(z_1,z_2)\in\R^2} |z_1|^i |z_2|^j |K(z)|d\lambda_2(z)} <\infty$ 
\item $\displaystyle{\int_{z\in\R^2} K(z)d\lambda_2(z)} = 1$ 
 and $\displaystyle{\int_{z=(z_1,z_2)\in\R^2} z_1^i z_2^j K(z)d\lambda_2(z)} = 0$ for all $i,~j\in \N,~ i+j<\ell$
\end{enumerate}
\begin{cond}\label{ass}
The density $f_m \in B_{1,\infty}^s(\R^2)$ for some $s >1$ and $\|f_m\|_{B_{1,\infty}^s(\R^2)}\leq \varLambda$ for some $\varLambda \in (0,\infty)$. 
It is further assumed that the $\ell^{\text{th}}$ moments of the pair $(X_0,X_m)$ are finite for $\ell = 1,\dots,\lceil s \rceil$, and
$\int_{\R^2} f_m(z) (1 + \|z\|^2) d\lambda_2(z), \int_{\R^2} K^2(z) (1 + \|z\|^2) d\lambda_2(z)
< \infty$.
We assume that some $(\eta,\gamma) \in (0,\infty)^2$ is provided satisfying $\beta(m) \leq \eta e^{-\gamma m}$.
The estimator $\widehat{\beta}_N(m)$ given by \eqref{eq:beta_m_n} for $~m=1,\dots,k^{\star}$ is obtained via 
\begin{itemize}
    \item [i.] a convolution kernel $K$ of order $[s]$ such that $c_0:=\int_{\R^2}|K(z)|d\lambda_2(z)<\infty $.
    \item[ii.] and a bandwidth of the form $h_N= (c\varLambda)^{-\frac{[s]}{[s]+1}} \left(\frac{[s]-1}{2}\right)^{-\frac{[s]}{2[s]+2}} n^{-\frac{1}{2[s]+2}} $
    \end{itemize} where,
    $\displaystyle c:=\frac{c_{[s]}(K)}{[s]!}$\, and 
    $N=\lfloor \frac{n-k^{\star}}{2(k^{\star}+1)} \rfloor $ with
\begin{equation}\label{eq:k_star_defn}
k^{\star}= \left \lfloor \frac{1}{\gamma}\left (\log  \frac{\gamma  \eta}{8C} + \left (\frac{3[s]+2}{2[s]+2}\right)\log n \right )\right\rfloor ,\end{equation}
 and 
\[C:=(2+c_0)(L_1)^{\frac{[s]}{[s]+1}}(c\varLambda)^{\frac{1}{[s]+1}}\] with 
$L_1^2 :=\sqrt{2} \int_{\R^2} f(z) (1 + \|z\|^2) d\lambda_2(z) \int_{\R^2} K^2(z) (1 + \|z\|^2) d\lambda_2(z) $ by choosing the hyperparameter in \cite[Remark 5.16]{nickl} to be equal to 2.
~\\
\end{cond}
We are now in a position to state our main result, namely,  Theorem~\ref{lem:expected_beta_error} below which provides bounds on the estimation error of $\widehat{\beta}_N$ given by \eqref{eq:beta_m_n}, when the assumptions stated in 
Condition~\ref{ass} are satisfied. Note that  this condition is fulfilled  by a number of standard  models. For instance, consider the stationary, geometrically ergodic $AR(1)$ model \[X_{t+1}=aX_t+\epsilon_t,~t \in \N\] where $\epsilon_t\sim \mathcal N(0,\sigma^2)$ for some $\sigma>0$ and where $X_0\sim\mathcal N(0,\sigma^2/(1-a^2))$ for some $a \in \mathbb R$ with $|a|<1$. All of the finite-dimensional marginals of this process are Gaussian; in particular, its marginal and joint densities $f_0$ and $f_m,~m=1,2,\dots$,  being infinitely differentiable, lie in any of the Besov spaces that we consider in this paper. Note that the corresponding Besov norm will be influenced by $\sigma$. In particular, for very small values of $\sigma$ the densities will have a sharp peak and the Besov norm will be large. 
\begin{thm}\label{lem:expected_beta_error} 
Under the assumptions stated and with the parameters defined in Condition~\ref{ass}, for each $m \in 1,\dots,k^{\star}$ we have
\begin{align*}
\E |\beta(m)-\hat{\beta}_N(m)| &\leq \frac{8Cn^{-\frac{[s]}{2[s]+2}}}{\gamma}\left ( e^\gamma+\log  \frac{\gamma  \eta}{8C} + \left (\frac{3[s]+2}{2[s]+2}\right )
 \log n  \right) 
\end{align*} 
Moreover, with probability at least $
1 - 
 8C
n^{-\frac{[s]}{2[s] + 2}}$ it holds that
\[
|\beta(m)-\hat{\beta}_N(m)| \leq 64 \bigl(1+ \frac{c_0}{2}\bigr) (C_1 + C_2 \log(n) +\frac{6\|K\|_1}{\gamma} \log^2(n)) n^{-\frac{[s]}{2[s]+2}}.
\]
where
\begin{align*}
C_1  &= \frac{1}{\gamma} \log\left(\frac{\gamma \eta}{8C}\right) \left(3(L_1)^{\frac{[s]}{[s]+1}}(c\varLambda)^{\frac{1}{[s]+1}} + \varLambda (c\varLambda)^{-\frac{[s]^2}{[s]+1}} \left(\frac{[s]-1}{2}\right)^{-\frac{[s]^2}{2[s]+2}}\right),     \\
C_2&=\frac{4\|K\|_1}{\gamma} \log\left(\frac{\gamma \eta}{8C}\right)  + \frac{3}{2\gamma} \left(3(L_1)^{\frac{[s]}{[s]+1}}(c\varLambda)^{\frac{1}{[s]+1}} +  \varLambda (c\varLambda)^{-\frac{[s]^2}{[s]+1}} \left(\frac{[s]-1}{2}\right)^{-\frac{[s]^2}{2[s]+2}} \right).
\end{align*}
\end{thm}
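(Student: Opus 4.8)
The plan is to reduce the estimation of $\beta(m)=\tfrac12\norm{f_m-f_0\otimes f_0}_1$ to an $L_1$ density-estimation problem and then to control the density estimator despite the temporal dependence of the sample path. First I would apply the reverse triangle inequality for the $L_1$-norm to \eqref{eq:beta_m_n}, obtaining
\[
|\beta(m)-\hat\beta_N(m)|\le \tfrac12\norm{f_m-\hat f_{m,N}}_1+\tfrac12\norm{f_0\otimes f_0-\hat f_{0,N}\otimes\hat f_{0,N}}_1 .
\]
For the second summand I would write $f_0\otimes f_0-\hat f_{0,N}\otimes\hat f_{0,N}=f_0\otimes(f_0-\hat f_{0,N})+(f_0-\hat f_{0,N})\otimes\hat f_{0,N}$ and use $\norm{f_0}_1=1$ together with $\norm{\hat f_{0,N}}_1\le\norm{\hat f_{m,N}}_1\le c_0$; since marginalisation is an $L_1$-contraction, $\norm{f_0-\hat f_{0,N}}_1\le\norm{f_m-\hat f_{m,N}}_1$. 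Hence everything is controlled by the single quantity $\norm{f_m-\hat f_{m,N}}_1$, up to the factor $(1+\tfrac{c_0}{2})$ visible in the statement.

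The central difficulty is that $Z_0,\dots,Z_{N-1}$ are not independent. The spacing $2(k+1)$ was chosen so that consecutive tuples are separated by more than $k$ time steps, whence the $\beta$-mixing coefficient of the sub-sampled chain is at most $\beta(k)\le\eta e^{-\gamma k}$. I would invoke a Berbee/Viennet-type coupling to construct i.i.d.\ variables $Z_0^*,\dots,Z_{N-1}^*$, each with law $P_m$, with total coupling-failure probability at most $(N-1)\beta(k)$. Writing $\hat f^*_{m,N}$ for the KDE built from the starred sample, only the indices where $Z_i\neq Z_i^*$ can make $\hat f_{m,N}$ and $\hat f^*_{m,N}$ differ, each contributing at most $2\norm{K}_1/N$ in $L_1$; this yields $\E\norm{\hat f_{m,N}-\hat f^*_{m,N}}_1\le 2\norm{K}_1\,\beta(k)$.

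It then remains to bound the i.i.d.\ risk $\E\norm{f_m-\hat f^*_{m,N}}_1$ by the usual bias--variance split. For the bias $\norm{f_m-\E\hat f^*_{m,N}}_1$ I would use that $K$ has order $[s]$ together with the embedding $\norm{f_m}_{W_1^{[s]}(\R^2)}\le\norm{f_m}_{B^s_{1,\infty}(\R^2)}\le\varLambda$ from Remark~\ref{rem}, giving a bound of order $c\varLambda h_N^{[s]}$; for the stochastic term I would apply the $L_1$ fluctuation bound of \cite[Remark 5.16]{nickl} with hyperparameter $2$, giving a bound of order $L_1/(\sqrt N h_N)$. Substituting the bandwidth $h_N$ and $N=\lfloor(n-k^\star)/(2(k^\star+1))\rfloor$ of Condition~\ref{ass}, the bias contributes a clean $n^{-[s]/(2[s]+2)}$ term while $1/\sqrt N\asymp\sqrt{(k^\star+1)/n}$ turns the stochastic term into $\sqrt{k^\star+1}\,n^{-[s]/(2[s]+2)}$, which I bound crudely by $(k^\star+1)\,n^{-[s]/(2[s]+2)}$; the coupling term is $O(n^{-(3[s]+2)/(2[s]+2)})$ and is therefore lower-order. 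Collecting the pieces and replacing $k^\star$ by its value \eqref{eq:k_star_defn} gives the stated expectation bound, with the $\tfrac1\gamma(\cdots+\tfrac{3[s]+2}{2[s]+2}\log n)$ prefactor.

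For the high-probability statement I would keep the bias and coupling estimates but replace the expectation of the stochastic term by a concentration argument: since altering a single $Z_i^*$ changes $\norm{\hat f^*_{m,N}-\E\hat f^*_{m,N}}_1$ by at most $2\norm{K}_1/N$, McDiarmid's inequality controls its deviation from the mean, producing (after substituting $h_N$, $N$ and $k^\star$) the $\log^2(n)\,n^{-[s]/(2[s]+2)}$ term with coefficient of order $\norm{K}_1/\gamma$. The residual probability $8C\,n^{-[s]/(2[s]+2)}$ is exactly the bound on the coupling-failure event $N\eta e^{-\gamma k^\star}$ obtained from the choice of $k^\star$. The main obstacle throughout is the coupling step, and in particular calibrating the sub-sampling gap $k^\star$ so that the coupling cost $N\beta(k^\star)$ and the KDE risk decay at the same rate $n^{-[s]/(2[s]+2)}$ in $n$; this is precisely where geometric ergodicity and the a priori bound $\beta(m)\le\eta e^{-\gamma m}$ are used.
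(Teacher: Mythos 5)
Your proposal is correct and follows essentially the same route as the paper's own proof: blocking with gap $2(k+1)$ plus a Berbee-type coupling to reduce to i.i.d.\ tuples, the triangle-inequality/product decomposition that produces the $(1+c_0/2)$ factor, the Gin\'e--Nickl $L_1$ bias--variance bounds for the KDE with the prescribed $h_N$ and $k^{\star}$, and, for the high-probability part, concentration of the $L_1$ deviation combined with the coupling-failure probability $N\beta(k^{\star})\le 8C n^{-[s]/(2[s]+2)}$. Your minor deviations---charging the coupling cost per index (giving $2\|K\|_1\beta(k)$ rather than the paper's cruder event-based $2N\beta(k)$), reducing the marginal error to the joint error via the $L_1$-contraction of marginalization instead of invoking the one-dimensional KDE bound separately, and using McDiarmid's inequality in place of the cited concentration theorem of Gin\'e--Nickl---are harmless refinements or equivalents, not a different method.
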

\noindent {\em See Section~\ref{sec:proofs} for a proof.}
\begin{rem}
  We refer to Section~\ref{sec:exp} for a discussion on natural choices for $k^{\star}$, $\eta$ and $\gamma$ in the context of some standard time-series models such as AR$(1)$.
\end{rem}
\subsection{Finite state-space}\label{subsec:finite}
In the special case where the state-space 
$\X$ of the Markov process is finite, we can relax the density assumptions and obtain an empirical estimate of $\beta$ by counting frequencies. 
More specifically, in this case, for each $t \in \N$ the $\sigma$-algebra $\sigma(X_t)$ is completely atomic with atoms $\{X_t=s\}, s \in \X$. Therefore, by \cite[vol. Proposition 3.21 pp. 88]{bradley2007introduction} we have
\begin{align}\label{eq:beta_d}
\beta(m) = \frac{1}{2}\sum_{u \in \X}\sum_{v \in \X}|P_m(\{(u,v)\})-P_0(\{u\})P_0(\{v\})|
\end{align}
where as before, $P_m$ and $P_0$ are the joint and the marginal distributions of $(X_0,X_m)$ and $X_0$ respectively. 
Given a sample $X_0,\dots,X_{n-1}$, we can obtain an empirical estimate of $\beta(m)$ in \eqref{eq:beta_d} as follows. Fix a lag of length $k \in 1,\dots,n$ (an optimal value for which will be specified in Proposition~\ref{prop:beta_d}).
Define the sequence of tuples 
$
Z_i = ( X_{2ki},X_{2k(i+1)})$,~$i =0,2,4,\dots, N-1$, with $N=N(k,n):=\lfloor \frac{n-k}{2(k+1)} \rfloor$. For each pair $(u,v) \in \X^2$ let 
\[
\hat{P}_{m,N}((u,v))=\frac{1}{N}\sum_{i=0}^{N-1}\mathbf{1}_{\{(u,v)\}}(Z_i).
\]
Similarly, for each $u \in \X$ we can obtain an empirical estimate of $P_0(u)$ as
\[
\hat{P}_{0,N}(u):=\frac{1}{2N}\sum_{i=0}^{2N-1}  \mathbf{1}_{\{u\}}(X_{ki})
.\]
Define
\begin{align}\label{eq:betahat_d}
\hat{\beta}_{N}(m) :=\frac{1}{2}\sum_{u \in \X}\sum_{v \in \X}|\hat{P}_{m,N}(\{(u,v)\})-\hat{P}_{0,N}(\{u\})\hat{P}_{0,N}(\{v\})|
\end{align}
\begin{thm}\label{prop:beta_d}
Consider a sample of length $n \in \N$ of a stationary, geometrically ergodic Markov process with finite state-space $\X$. Define $N(k,n)=\lfloor \frac{n-k}{2(k+1)} \rfloor,~n,k \in \N$. 
Let $k^{\star}:=\left \lfloor \frac{1}{\gamma}\log \left(\frac{\eta\gamma n^{3/2}}{\sqrt{2}|\X| }\right)\right\rfloor $ and $N=N(k^{\star},n)$ with $n \geq \max \left \{\left (\frac{\sqrt{2}|\X| e^{2\gamma}}{\eta \gamma}\right)^{2/3}, \left(\frac{\eta \gamma}{|\X|}\right)^{2/3} \right\}$. 

For every $m=1,\dots,k^{\star}$ and $\hat{\beta}_N(m)$ given by \eqref{eq:betahat_d} we have,
\begin{align*}
\E|\hat{\beta}_{N}(m)-\beta(m)|
&\leq \frac{\sqrt{8}|\X| n^{-1/2}}{\gamma}\left(e+\log \left(\frac{\eta\gamma}{\sqrt{2}|\X| }\right)+\frac{3}{2}\log n\right)
\end{align*}
Moreover, for $\epsilon >0$ it holds that
\begin{align*}
\Pr(|\hat{\beta}_{N}(m)-\beta(m)| \geq \epsilon ) 
&\leq \frac{e\sqrt{2}|\X| n^{-1/2}}{2\log(\frac{\eta \gamma n^{3/2}}{\sqrt{2}|\X|}) }+4|\X|^2\exp\left \{-\frac{\gamma n\epsilon^2}{48 |\X|^4 \log n}\right \}
\end{align*}
\end{thm}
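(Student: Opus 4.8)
The plan is to reduce the estimation of $\beta(m)$ to the estimation of the underlying cell probabilities $P_m(\{(u,v)\})$ and $P_0(\{u\})$, and then to control these via a blocking argument that turns the geometric mixing into an explicit coupling error. Starting from the closed forms \eqref{eq:beta_d} and \eqref{eq:betahat_d}, I would first apply the reverse triangle inequality inside the $L_1$-sum, followed by the ordinary triangle inequality, to obtain
\begin{align*}
\abs{\hat{\beta}_{N}(m)-\beta(m)} &\le \frac{1}{2}\sum_{u,v\in\X}\abs{\hat{P}_{m,N}(\{(u,v)\})-P_m(\{(u,v)\})} \\
&\quad+\frac{1}{2}\sum_{u,v\in\X}\abs{\hat{P}_{0,N}(\{u\})\hat{P}_{0,N}(\{v\})-P_0(\{u\})P_0(\{v\})}.
\end{align*}
The product term is linearised by writing $\hat{P}_{0,N}(u)\hat{P}_{0,N}(v)-P_0(u)P_0(v)=\hat{P}_{0,N}(u)\bigl(\hat{P}_{0,N}(v)-P_0(v)\bigr)+P_0(v)\bigl(\hat{P}_{0,N}(u)-P_0(u)\bigr)$, so the whole error reduces to the single-cell deviations $\abs{\hat{P}_{m,N}(\{(u,v)\})-P_m(\{(u,v)\})}$ and $\abs{\hat{P}_{0,N}(\{u\})-P_0(\{u\})}$ summed over $\X^2$ and $\X$ respectively.

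Next I would address the dependence. Both $\hat{P}_{m,N}$ and $\hat{P}_{0,N}$ are empirical averages over blocked subsequences (the pairs $Z_0,\dots,Z_{N-1}$ spaced $\approx 2k$ apart, and the points $X_0,X_k,\dots,X_{(2N-1)k}$ spaced $k$ apart), whose successive entries are separated by a temporal gap of order $k$. Since geometric ergodicity gives $\beta(k)\le \eta e^{-\gamma k}$, Berbee's coupling lemma lets me replace each blocked sequence by an i.i.d.\ surrogate drawn from the correct law ($P_m$, resp.\ $P_0$) on an event whose complement has probability at most $N\beta(k)\le N\eta e^{-\gamma k}$ (resp.\ $2N\beta(k)$). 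On this event the surrogate frequencies are genuine i.i.d.\ empirical means of indicators. For the expected-error bound I would then bound each $\E\abs{\hat{P}_{m,N}(\{(u,v)\})-P_m(\{(u,v)\})}$ by the square root of its variance, namely $\sqrt{P_m(\{(u,v)\})/N}$, and sum over $\X^2$ via Cauchy--Schwarz to get a statistical term of order $\abs{\X}/\sqrt{N}$; the marginal term is of the same order, and the coupling error contributes an additional $\mathcal{O}(N\eta e^{-\gamma k})$, which is of lower order for the chosen $k$.

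For the high-probability statement I would instead apply a Hoeffding/Bernstein inequality to each surrogate cell frequency with deviation threshold proportional to $\epsilon/\abs{\X}^2$, take a union bound over the $\abs{\X}^2$ joint cells and $\abs{\X}$ marginal cells, and add the coupling-failure probability; this produces the exponential term $4\abs{\X}^2\exp(-\gamma n\epsilon^2/(48\abs{\X}^4\log n))$ together with the additive penalty $N\eta e^{-\gamma k}$, which is exactly the first summand $\tfrac{e\sqrt{2}\abs{\X}}{2\log(\cdot)}\,n^{-1/2}$. The final step is to optimise $k$: the statistical error worsens as $k$ grows (since $N\approx n/(2k)$ shrinks) while the coupling penalty $N\eta e^{-\gamma k}$ improves; balancing these and substituting $N=N(k,n)$ pins down $k^{\star}=\lfloor\gamma^{-1}\log(\eta\gamma n^{3/2}/(\sqrt{2}\abs{\X}))\rfloor$, with the imposed lower bounds on $n$ ensuring $k^{\star}\ge1$ and $N(k^{\star},n)\asymp n/k^{\star}\asymp \gamma n/\log n$. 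Substituting $k^{\star}$ and this relation between $N$, $k^{\star}$ and $n$ into the two bounds and collecting constants then yields the stated inequalities, the $\log n$ factors entering through $k^{\star}\asymp(\log n)/\gamma$.

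I expect the coupling step and its bookkeeping to be the main obstacle. One must arrange Berbee's lemma so that the $N$ (resp.\ $2N$) blocks are separated by gaps large enough that the accumulated coupling cost is genuinely controlled by $N\eta e^{-\gamma k}$, reconcile the two different blocking schemes (the joint estimator uses $N$ pairs spaced $2k$, the marginal estimator uses $2N$ points spaced $k$), and track the contribution of the cross terms arising in $\hat{P}_{0,N}(u)\hat{P}_{0,N}(v)$. Extracting the explicit constants from the $k^{\star}$-optimisation — in particular the factor $e$, the $\sqrt{2}$, and the denominator $48$ — so that they emerge cleanly is where most of the care will be required, whereas the concentration and Cauchy--Schwarz steps are essentially routine.
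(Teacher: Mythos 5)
Your proposal follows essentially the same route as the paper's proof: the same triangle-inequality decomposition with linearization of the product term, the same Berbee-based blocking/coupling to i.i.d.\ surrogates with failure probability $N\beta(k)$, the same variance--Cauchy--Schwarz bound of order $|\X|/\sqrt{N}$ for the expectation, the same per-cell Hoeffding plus union bound for the tail, and the same optimization yielding $k^{\star}$. The only cosmetic difference is that the paper builds the surrogate marginal estimator from the first coordinates of the $N$ coupled pairs (a single coupling event) rather than coupling the $2N$-point marginal sequence separately, which only shifts bookkeeping constants.
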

\noindent{\em The proof is provided in Section~\ref{sec:proofs}}. 

In this setting, we are also able to simultaneously control the estimation error for all $m=1,\dots,k^{\dagger}$ where $k^{\dagger}$ is specified in the statement of Theorem~\ref{prop:beta_dvc}. The proof relies on a VC argument which helps  replace a factor of $k^{\dagger}$ (which would have otherwise been deduced from a union bound) with a factor of $\log k^{\dagger}$. 
\begin{thm}\label{prop:beta_dvc}
Consider a sample of length $n \in \N$ of a stationary, geometrically ergodic Markov process with finite state-space $\X$. Define $N(k,n)=\lfloor \frac{n-k}{2(k+1)} \rfloor,~n,k \in \N$. 
Let $N=N(k^{\dagger},n)$ with $k^{\dagger}:=\left \lfloor \frac{1}{\gamma}\log\left( \frac{\eta \gamma n^{3/2}}{4\sqrt{2}|\X|^2\log_2(n|\X|)}\right)\right \rfloor $ and $n \geq \max \left \{\frac{4\sqrt{2}|\X|^3 e^\gamma}{\eta \gamma}, \frac{2k(k+1)}{3k+2}\right \}$. 
For $\hat{\beta}_N$ given by \eqref{eq:betahat_d} we have,
{\small \begin{align*}
\E[\sup_{m \in 1,\dots,k^{\dagger}}|\hat{\beta}_{N}(m)-\beta(m)|]\leq \frac{4\sqrt{2}|\X|^2 n^{-1/2} \log_2 (n|\X|)}{\gamma}\left(e+\log\left(\frac{\eta \gamma n^{3/2}}{4\sqrt{2}|\X|^2\log_2(n|\X|)}\right)\right)
\end{align*}}
Furthermore, for $\epsilon >0$ the probability $\Pr(\sup_{m \in 1,\dots,k^{\dagger}}|\hat{\beta}_{N}(m)-\beta(m)| \geq \epsilon )$  is at most
{\small \begin{align*}
\frac{4\sqrt{2}|\X|^2 \log(n|\X|)n^{-1/2}}{\log(\frac{\eta \gamma n^{3/2}}{8\sqrt{2}|\X|^2})}
+16|\X|^2 \log\left(\frac{3|\X|}{2\gamma}\log(\eta^{2/3} \gamma^{2/3} n)\right)\exp\left\{-\frac{\gamma n\epsilon^2}{3072|\X|^4\log n}\right\}
\end{align*}}
{\em The proof is provided in Section~\ref{sec:proofs}}. 
\end{thm}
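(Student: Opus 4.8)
The plan is to bound $\sup_{m\le k^{\dagger}}\abs{\hat\beta_N(m)-\beta(m)}$ by reducing it to a deviation of empirical measures that is \emph{uniform in the lag} $m$, and to keep the two error sources separate: the dependence among the blocked samples, handled by coupling, and the finite-sample fluctuation, handled by a VC/empirical-process bound over the lags. First I would apply the reverse triangle inequality to the representations \eqref{eq:beta_d} and \eqref{eq:betahat_d} to get, for each $m$,
\begin{align*}
\abs{\hat\beta_N(m)-\beta(m)}
&\le \tfrac12\sum_{u,v}\abs{\hat P_{m,N}(u,v)-P_m(u,v)}\\
&\quad+\tfrac12\sum_{u,v}\abs{\hat P_{0,N}(u)\hat P_{0,N}(v)-P_0(u)P_0(v)}.
\end{align*}
The second sum splits through $\hat P_{0,N}(u)\hat P_{0,N}(v)-P_0(u)P_0(v)=\hat P_{0,N}(u)\bigl(\hat P_{0,N}(v)-P_0(v)\bigr)+\bigl(\hat P_{0,N}(u)-P_0(u)\bigr)P_0(v)$ and, since $\sum_u\hat P_{0,N}(u)=\sum_v P_0(v)=1$, is at most $2\norm{\hat P_{0,N}-P_0}_1$, which is \emph{independent of} $m$. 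Hence, taking the supremum over $m$, it only remains to control $\tfrac12\sup_{m\le k^{\dagger}}\norm{\hat P_{m,N}-P_m}_1$ uniformly, together with the single marginal term $\norm{\hat P_{0,N}-P_0}_1$ already handled in Theorem~\ref{prop:beta_d}.

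Next I would remove the dependence. Consecutive blocks entering $\hat P_{m,N}$ are separated by a gap of order $k^{\dagger}$, so they are $\beta$-mixing with coefficient at most $\beta(k^{\dagger})\le\eta e^{-\gamma k^{\dagger}}$. The key point is that a block can be taken to record the entire window of $k^{\dagger}+1$ consecutive observations, so it simultaneously carries the pair at \emph{every} lag $m\le k^{\dagger}$; a single Berbee-type coupling then replaces the $N$ dependent blocks by $N$ i.i.d.\ blocks at a total-variation cost of at most $(N-1)\beta(k^{\dagger})$, valid for all $m$ at once. On the coupled i.i.d.\ blocks the estimators have exactly the intended marginal laws, so the analysis reduces to an i.i.d.\ problem, and the coupling cost $N\eta e^{-\gamma k^{\dagger}}$ is added back as an additive ``failure'' term in the tail bound (and integrated into the expectation).

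The heart is the uniform fluctuation on the i.i.d.\ blocks. I would bound $\sup_{m\le k^{\dagger}}\norm{\hat P_{m,N}-P_m}_1\le \abs{\X}^2\,\sup_{m\le k^{\dagger}}\sup_{(u,v)\in\X^2}\abs{\hat P_{m,N}(u,v)-P_m(u,v)}$ and control the right-hand supremum over the finite index class $\{(m,(u,v)):m\le k^{\dagger},\,(u,v)\in\X^2\}$. A per-cell Hoeffding bound gives a sub-Gaussian tail of the form $\exp(-cN\epsilon^2/\abs{\X}^4)$, which after inserting $N\asymp\gamma n/\log n$ produces the stated $\exp(-\gamma n\epsilon^2/(3072\abs{\X}^4\log n))$ and the $\abs{\X}^2$ prefactor from the cells. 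Crucially, for the \emph{lag} index I would use the growth function of the class $\{(w_0,\dots,w_{k^{\dagger}})\mapsto\one[(w_0,w_m)=(u,v)]\}$ rather than a union bound: this is exactly the VC step that trades the multiplicative factor $k^{\dagger}$ for an additive $\log k^{\dagger}$, which surfaces as the $\log(n\abs{\X})$ and $\log\bigl(\tfrac{3\abs{\X}}{2\gamma}\log(\cdots)\bigr)$ factors rather than $k^{\dagger}$ itself.

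Finally I would assemble the pieces, add the coupling term and the $m$-free marginal term, and choose $k^{\dagger}\approx\tfrac1\gamma\log\bigl(\eta\gamma n^{3/2}/(4\sqrt2\abs{\X}^2\log_2(n\abs{\X}))\bigr)$ so that the coupling term is driven below the uniform estimation term; the factor $e+\log(\cdots)$ is then $\asymp\gamma k^{\dagger}$ and, with the $1/\gamma$ out front and $N\asymp n/k^{\dagger}$, yields the stated expectation bound after integrating the uniform tail, while the probability statement follows by intersecting the coupling-failure event with the uniform Hoeffding/VC tail. The main obstacle I anticipate is precisely this empirical-process step under dependence: one must arrange the coupling so that a \emph{single} replacement of the dependent blocks by i.i.d.\ ones is simultaneously valid for all lags $m$, and then bound the growth function of the lag-indexed class sharply enough that the gain $\log k^{\dagger}$ over $k^{\dagger}$ survives the subsequent optimization of $k^{\dagger}$. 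Propagating the explicit constants ($3072$, the $\abs{\X}^2$ and $\log_2(n\abs{\X})$ prefactors, and the lower bounds on $n$) through the Massart/Hoeffding and bounded-differences estimates is the remaining delicate but routine bookkeeping.
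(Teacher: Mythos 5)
Your proposal follows the paper's proof in all essential respects: the same decomposition of $|\hat\beta_N(m)-\beta(m)|$ into a joint-distribution $L_1$ term plus an $m$-free marginal term, the same single Berbee coupling applied to whole blocks of $k^{\dagger}+1$ consecutive observations so that one coupling serves every lag simultaneously at cost $N\beta(k^{\dagger})$, the same VC/growth-function argument over the lag-indexed indicator class $\{h_{m,(u,v)}\}$ in place of a union bound over $m$, and the same optimization of $k^{\dagger}$ balancing the coupling cost against the uniform fluctuation. The only deviations are bookkeeping-level (the paper invokes an expectation bound for VC classes and the Vapnik--Chervonenkis inequality of Devroye et al.\ directly, where you propose per-cell Hoeffding tails combined with the growth function and then integrating), so your proposal is correct and essentially identical to the paper's argument.
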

\section{\sc \bfseries Empirical Evaluations}\label{sec:exp}
In this section, we evaluate the performance of our estimators using synthetic data generated from the well-known \textit{autoregressive (AR)} time-series model of order 1 (see, e.g., \cite{SHUM25}). Additionally, we examine the case where the autoregressive process deviates from the traditional AR model, featuring a non-Gaussian noise process. This latter setting serves to highlight the model-agnostic nature of our estimators.

A first-order autoregressive process, AR(1) can be defined as follows:
\[
X_t = \phi_1 X_{t-1} + \epsilon_t,\quad t \geq 1
\]
with parameter $\phi_1 \in \mathbb{R}\setminus \{0\}$ and an i.i.d. sequence of zero-mean normally distributed noise random variables $\epsilon_1, \epsilon_2, \ldots$ with variance $\sigma^2$. The process is stationary if $|\phi_1| < 1$ and, and by letting $X_0$ have distribution $\mathcal{N}(0,\sigma^2/(1-\phi_1^2))$ we can guarantee stationarity of the process $X_0,X_1,X_2,\ldots$.
To be able to calculate the estimation error, we compute the true $\beta$-mixing coefficients numerically. Furthermore, in the context of AR models, it is common to estimate the \textit{autocorrelation function (ACF)} which is a measure of linear (as opposed to full) dependence. More specifically, the ACF at lag $m$ is given by $\rho_m = E(X_0 X_{m})$. A standard estimator of $\rho_m$ when given a sample of length $n$ can be obtained as 
\begin{equation}\label{eq:rhohat}
\hat \rho_m = \frac{1}{(n-m)\hat \sigma^2} \sum_{t=0}^{n-m} X_t X_{t+m}, \quad \text{ with \enspace} \hat \sigma^2 = \frac{1}{n} \sum_{t=0}^{n-1} X_t^2.  
\end{equation}
In the case of an AR(1) process with Gaussian noise, an estimate of the $\beta$-mixing coefficients of the process can be readily obtained by using the ACF estimator given by \eqref{eq:rhohat}. With this observation, we obtain a (model-specific) estimate of the $\beta$-mixing coefficient of the AR(1) process. This estimator is used as a point of comparison against our own estimators of $\beta$ in this experiment. It is important to note that unlike the $\hat{\rho}_m$-based estimator, our estimators do not have prior knowledge of the underlying time-series model, beyond assumptions on the smoothness of the densities. In fact, as part of our experiments, we further emphasize the model-agnostic nature of our estimator by comparing it with the ACF-based estimator under model-mismatch conditions: when the noise process is no longer Gaussian. In this setting, while the ACF-based estimator demonstrates bias and fails to consistently estimate the mixing coefficients, our estimator maintains remains consistent.

As discussed in Section~\ref{sec:res}, our estimators rely on KDEs to estimate $\beta$ and as such require smoothness conditions on the the densities. Additionally, proper selection of $k^{\star}$, necessitates (loose) bounds on the mixing rate parameters of the process. Below, we discuss these conditions for AR(1) and provide insights into our approach for choosing $k^{\star}$ in our experiments.

\textbf{KDE estimators for AR(1).}
Classical time-series models often assume Gaussian increments. This implies in various contexts that the joint densities of $X_i$ and $X_{i+m}$ is Gaussian. Gaussian densities are infinitely often differentiable and correspond to the smoothness parameter $s$ of a Besov space approaching infinity. This yields a very fast rate of convergence of the KDE when the bandwidth $h_n$ of the KDE is adapted to this context. In particular, the bandwidth $h_n$ depends logarithmically on $n$
 and the rate of convergence will approach $\sqrt{\log(n)/n}$ in the $L^2$-distance and $\log(n)/n$ for the mean squared error independently of the dimension. 
A natural alternative to selecting the bandwidth in this way is to make use of popular heuristics for choosing $h_n$. In practice, it is common to use either \textit{Scott's rule} or \textit{Silverman's rule} to select $h_n$;  in our experiments $h_n$ is  selected according to Scott's rule, which for an $\mathbb{R}^d$-valued sample sets $h_n = n^{-1/(4+d)}$. In the context of Besov spaces this corresponds to the case $s=2$ \cite[p.404]{nickl} with corresponding rate of convergence of the KDE of order $n^{-2/(4+d)}$. Using Scott's rule leads to suboptimal rates of convergence of the KDE if our model assumptions are, in fact, correct since Scott's rule is pessimistic about the smoothness of the density and does not exploit the smoothness of the Gaussian distribution. 

\textbf{Choice of $k^{\star}$.} Another hyperparameter we are required to select is $k^{\star}$, which corresponds the gap between time-points at which we collect observations. Larger values of $k^{\star}$ lead to observations that are closer to independent. The downside of large values of $k^{\star}$ is that significant amount of information is discarded. In Condition \ref{ass}, we provided a general way to select $k^{\star}$ based on loose bounds on the mixing rate parameter. In the context of classical time-series models we have natural alternatives to this choice. In particular, for AR(1) models it is possible to bound the mixing-rate in terms of the AR(1) parameter and it is natural to replace our general assumption in Condition \ref{ass} by an AR-specific assumption on the parameters (e.g. the AR(1) parameter is smaller than $0.95$). We develop these alternative approaches to select $k^{\star}$ as part of the model specific sections that follow.

As follows from \cite[Thm.10.14]{JAN97}, for an AR(1) process we can bound $\beta(m)$ from above and below using $\rho_m$ as 
\begin{equation} \label{bnd:beta_rho}
\frac{1}{\pi} |\rho_m| - \frac{\rho_m^2 + (1/4) \rho_m^4}{(1-\rho_m)^2} \leq \beta(m) \leq \frac{1}{\sqrt{2\pi}} |\rho_m| + \frac{\rho_m^2 + (1/16) \rho_m^4}{(1-\rho_m)^2}.
\end{equation}
Note that the bound becomes tighter for small $\rho_m$ and looser as $\rho_m$ approaches $1$. For larger values of $\rho_m$ it indeed becomes trivial. The bound given in \eqref{bnd:beta_rho} allows us to control $\beta(m)$  by means of the correlation coefficients of the AR model. The correlation coefficients are in turn directly related to $\phi_1$. It is natural to work in the AR(1) context with loose assumptions on $\phi_1$ and the above discussion provides a path for turning such assumptions into bounds on $\beta(m)$ which we can then use to tune $k^{\star}$. The relationship between $\rho_m$, $m\geq 1$, and $\phi_1$ is given by  
\[\rho_m = \phi_1^m E(X_0^2)/E(X_0^2) = \phi_1^m.\] 
In particular, if we assume an upper bound $b$ on $|\phi_1|$ then $|\rho_m| \leq b^m$. Substituting this into Equation
\eqref{bnd:beta_rho} and ignoring second order terms provides us with the bound
\begin{equation}\label{eq:bound_bm}
\beta(m) \lesssim \frac{1}{\sqrt{2\pi}} b^m.
\end{equation}
Taking into account \eqref{eq:bound_bm} when optimizing the bound on the expected error given by \eqref{eq:bound}, and noting the smoothness of the Gaussian density, we propose choosing $k^{\star}$ in this case as
\begin{equation}\label{eq:kstar_AR}
\left\lfloor\frac{\log\log(1/b)+(3/2)
\log n }{\log (1/b)} \right\rfloor.
\end{equation}
Observe that for a fixed $n$, when $b \to 1$ we obtain $k^{\star} \to \infty$. Another alternative is to relax the requirement for the estimator to be based on (near)-independent samples by setting $k^{\star} = 0$. In this case, one can leverage the ergodicity of the underlying process to achieve consistency, as demonstrated in \cite{KL23}. An advantage of this approach is that it eliminates the need for bounds on the mixing rate to select $k^{\star}$, allowing us to utilize the entire dataset without discarding any data. However, this might result in a higher variance, which would need to be appropriately controlled. Although theoretical results for finite-time analysis in the context of Markov processes and KDE-based estimators are currently lacking, we conjecture that it may be possible to prove the consistency of our estimator for $k^{\star} = 0$. We provide some simulation results below which lend support to this hypothesis.

\begin{figure}[!ht]
 \begin{minipage}{0.5\textwidth}
        \centering
        \includegraphics[height=6cm]{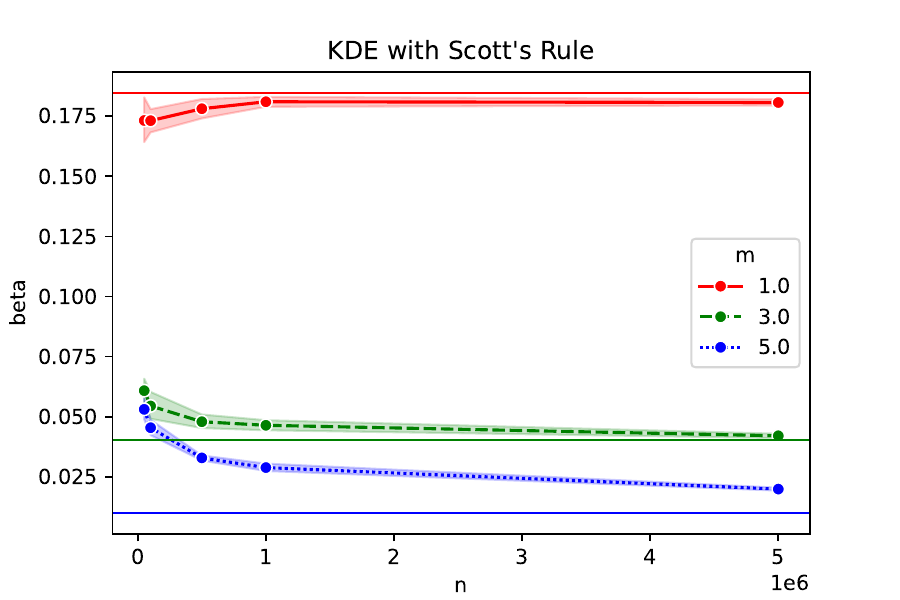}
    \end{minipage}\hfill
    \begin{minipage}{0.5\textwidth}
        \centering
        \includegraphics[height=6cm]{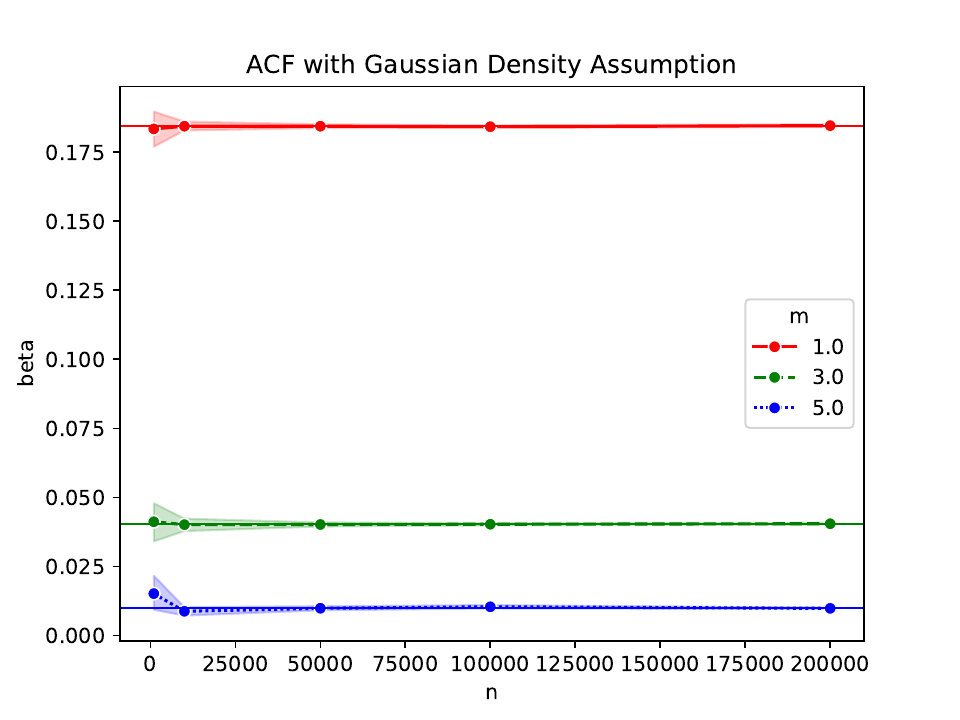}
    \end{minipage}
 \caption{The plot on the left illustrates the performance of our KDE-based estimators, while the plot on the right demonstrates the performance of the ACF-based estimators for the estimation of $\beta(m)$, $m = 1,3,5$ when samples are generated according to AR(1). The horizontal lines represent the true values of $\beta$, with the top solid line (red) corresponding to $m=1$, the middle solid line (green) to $m=3$, and the bottom solid line (blue) to $m=5$. The mean estimates (averaged over 20 rounds) are plotted for each $m$ value, and the shaded areas indicate the uncertainty in these estimates. As $n$ increases, the estimates clearly converge to their respective true $\beta$ values.
In the right plot, the results for the ACF-based estimator are shown. The ACF-based estimator takes advantage of the knowledge that the process is AR(1) and, more specifically, assumes Gaussian densities. This assumption leads to faster convergence in this particular setting, as the model assumption is accurate.
}
\label{fig:AR1}
\end{figure}
\textbf{Simulation Results.} We start by considering an AR(1) model with $\phi_1 = 1/2$ and  $\sigma^2 = 1$. 
Figure \ref{fig:AR1} shows the estimation results for the values of $\beta(m),~m=1,3$ and $5$. The plot on the left shows the results for our estimator when $k^{\star}$ is chosen according to Equation \eqref{eq:kstar_AR} and using the assumption that $|\phi_1| \leq 0.9$. The plot on the right shows the results for a naive plug-in estimator that relies on the empirical ACF to calculate the empirical joint and marginal densities of the Gaussian distribution, and integrate the absolute  difference to estimate $\beta$. In this experiment, the true $\beta$ values (numerically determined)  are $\beta(1) \approx  0.1846, \beta(3) \approx 0.0402 $ and  $\beta(5) \approx 0.00996$. 
As expected, the ACF-based estimator which relies on the fact that the process is AR(1), is able to efficiently estimate $\beta$ in this setting by calculating a simple $2 \times 2$ covariance matrix.
\begin{figure}[!ht]
\label{fig:AR1_Second}
\centering
 \begin{minipage}{0.5\textwidth}
        \centering
        \begin{tikzpicture}
        \node[inner sep=0pt]  at (0,0) {\includegraphics[width=1.1\textwidth]{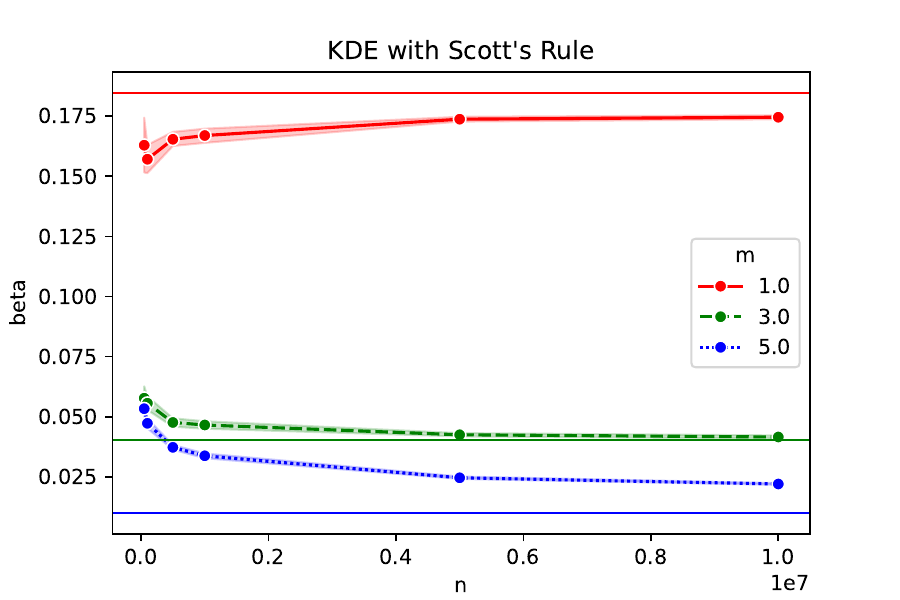}};
        \node[text width = 4cm] at (0,0) {$k^{\star}$ given by \eqref{eq:kstar_AR}};
        \end{tikzpicture}
\end{minipage}\hfill
 \begin{minipage}{0.5\textwidth}
        \centering
        \begin{tikzpicture}
        \node[inner sep=0pt]  at (0,0) {
        \includegraphics[width=1.1\textwidth]{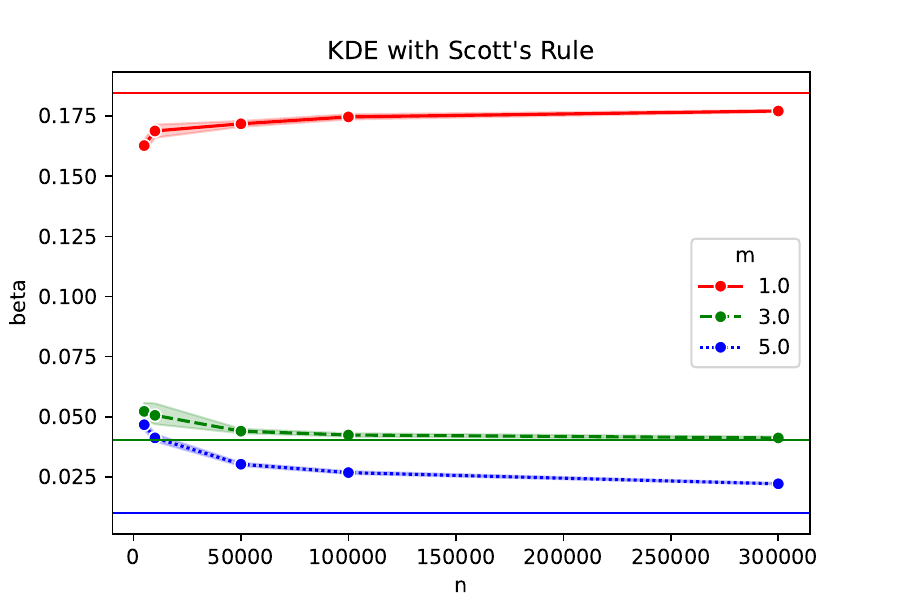}};
        \node at (0,0) {$k^{\star} = 0$};
        \end{tikzpicture}        
    \end{minipage}\\
    \begin{minipage}{0.5\textwidth}
        \centering
        \includegraphics[width=1.1\textwidth]{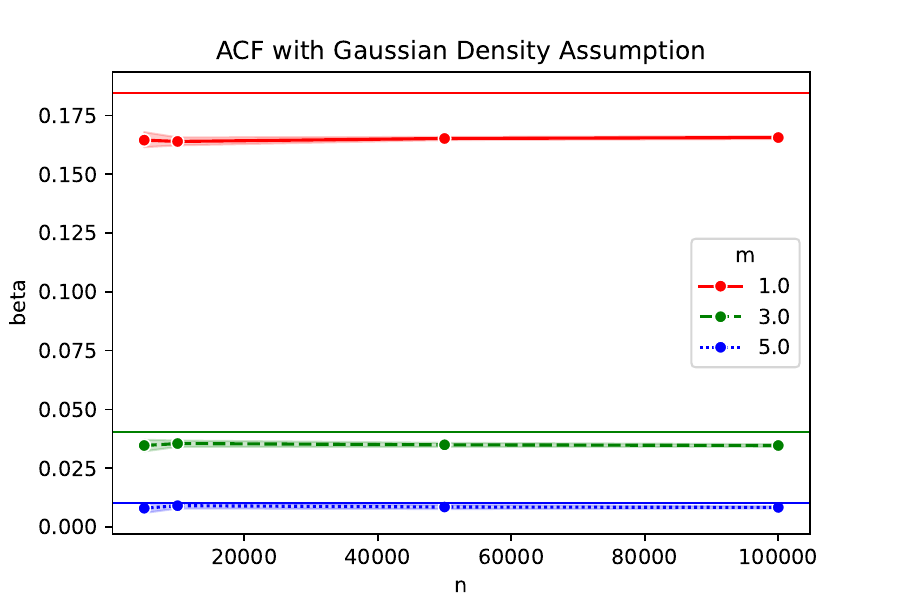}
    \end{minipage}
\caption{Performance of our estimators (top) and the ACF-based estimator (bottom) when the sample is a transformation of an AR(1) process.}
\label{fig:AR1_misspec}
\end{figure}

In a second set of experiments, we study the performance of the estimators under a mild model miss-specification. In this setting, the samples are given by $\exp(X_t) - E(\exp(X_t))$ where $X_t$ is an AR(1) process. This yields a sequence of (centered) log-normal random variables. Observe that this transformation of $X_t$ does not change the $\beta$-dependence of the original process. The results are shown in Figure \ref{fig:AR1_misspec}. The plots on the top left and right show the results of our estimator where $k^{\star}$ is chosen according to \eqref{eq:kstar_AR} and $k^{\star}=0$ respectively; the latter is more sample-efficient. Two notable observations can be gleaned from the results. 
1. The model-mismatch encountered by the ACF-based estimator in this case results in a noticeable systematic bias in the estimates, even under a minor model mis-specification. This bias is particularly prominent for $m=1$, where the dependencies are more significant. 2. Interestingly, in the case where $k^{\star}=0$, the estimator performs well, despite the samples being dependent. This leads us to believe that it may be possible to prove consistency of our KDE-estimators without the need to rely on blocks that are $k^{\star}$ steps apart.

\section{\sc \bfseries Discussion}\label{sec:conc}
We have introduced novel methods for estimating the $\beta$-mixing coefficients of a real-valued geometrically ergodic Markov process from a sample-path of length $n$. Under standard smoothness conditions on the densities, we have established a convergence rate of $\mathcal{O}(\log(n) n^{-[s]/(2[s]+2)})$ for the expected error of our estimator, and have provided a high-probability bound on the estimation error. Furthermore, we have derived analogous bounds in the case of a finite state-space, where no density assumptions are required, and demonstrate an expected error rate of $\mathcal O(\log(n) n^{-1/2})$. Although our work is primarily theoretical, we have also presented empirical evaluations to further validate the performance of the proposed estimators. In our experiments, we demonstrate the efficacy of our estimators across a range of geometircally ergodic Markov processes without being tied to any particular modeling specification, such as linear or autoregressive structures.

Our estimators rely on (near)-independent samples obtained from blocks separated by some $k^{\star}$ time-steps in the sample path. While this is a classical technique, based on Berbee's coupling lemma, allowing to control dependencies, it has its downsides. First, to determine an appropriate value for $k^{\star}$ we require bounds, albeit loose, on the mixing parameters. Moreover, using blocks that are $k^{\star}$ steps apart leads to sample-inefficiency. 
An alternative strategy could involve eliminating the need for (near)-independent samples by setting $k^{\star} = 0$. As in \cite{KL23} it may be possible to exploit the underlying process's ergodicity to ensure consistency. However, this might result in a higher variance to be controlled. While to the best of our knowledge there are currently no theoretical results available for finite-time analysis in the context of Markov processes and KDE-based estimators, we conjecture that it could be feasible to demonstrate the consistency of our estimator under the condition that $k^{\star} = 0$. We leave this investigation for future work. 

\section{\sc \bfseries Technical Proofs}\label{sec:proofs}
\setcounter{page}{1}
In this section, we provide proofs for our theorems. A common ingredient is a coupling argument for time-series, which allows one to move from dependent samples to independent blocks. This is facilitated by Lemma~\ref{lem:ourberbee} below, which is a standard result based on, commonly used in the analysis of dependent time-series, see e.g. \cite{levental1988uniform, YU1994, arcones1994central}. For completeness, we provide a proof of this lemma, which in turns relies on a coupling Lemma of \cite{berbee1979random} stated below. 
\begin{lem}\label{lem:ourberbee}
Let $X_i,~i \in \N$ be a stationary sequence of random variables with $\beta$-mixing coefficients $\beta(j),~j \in \N$. For a fixed $k,~\ell \in \mathbb N$ let $Y_i = X_{i(k+\ell)},\dots,X_{ik+(i+1)\ell}$ for $i \in {\N}$. There exists a sequence of independent random variables $Y^*_i,~i \in \N$ taking values in $\R^{\ell}$ and have the same distribution as $Y_i$ such that for every $i \in {\N}$ we have,
\[\Pr(Y^*_i \neq Y_i)  \leq \beta(k).\]
\end{lem}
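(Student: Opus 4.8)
\emph{Proof plan.} The plan is to build the independent copies $Y_i^*$ one block at a time by repeatedly invoking Berbee's coupling lemma, exploiting the fact that consecutive blocks are separated by a gap of $k$ time steps so that the relevant $\beta$-dependence is controlled by $\beta(k)$. First I would enlarge the probability space to carry a sequence $U_0, U_1, \dots$ of i.i.d.\ $\mathrm{Unif}[0,1]$ random variables, independent of the whole process $(X_t)_{t \in \N}$; these supply the external randomness required by Berbee's lemma at each step (note that the blocks take values in the Polish space $\R^{\ell}$, so the lemma applies). I then construct $Y_i^*$ by induction on $i$: at step $i$ I apply Berbee's lemma to the pair consisting of the ``past'' $A := (Y_0, \dots, Y_{i-1})$ and the current block $B := Y_i$, using the fresh uniform $U_i$, to obtain a variable $Y_i^*$ that is $\sigma(Y_0, \dots, Y_i, U_i)$-measurable, satisfies $Y_i^* \stackrel{d}{=} Y_i$, is independent of $(Y_0, \dots, Y_{i-1})$, and obeys $\Pr(Y_i \neq Y_i^*) = \beta\bigl(\sigma(Y_0, \dots, Y_{i-1}), \sigma(Y_i)\bigr)$.

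Next I would bound this coupling error by $\beta(k)$. The blocks $Y_0, \dots, Y_{i-1}$ are measurable with respect to the coordinates $X_t$ with $t$ at most the last index $e_{i-1}$ of $Y_{i-1}$, whereas $Y_i$ depends only on coordinates whose indices start exactly $k$ steps later; thus, taking $j := e_{i-1}+1$, we have $\sigma(Y_0, \dots, Y_{i-1}) \subseteq \sigma(X_t : 0 \le t \le j-1)$ and $\sigma(Y_i) \subseteq \sigma(X_t : t \ge j+k)$. By the monotonicity of the $\beta$-dependence under passing to sub-$\sigma$-algebras and the definition of $\beta(k)$ as a supremum over such splits, it follows that $\beta(\sigma(Y_0, \dots, Y_{i-1}), \sigma(Y_i)) \le \beta(k)$, which yields the claimed bound $\Pr(Y_i^* \neq Y_i) \le \beta(k)$ for every $i$.

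It remains to verify that $(Y_i^*)_{i \in \N}$ is in fact mutually independent, and this is the step I expect to be the main obstacle, since Berbee's lemma only guarantees independence of $Y_i^*$ from the \emph{true} past $(Y_0, \dots, Y_{i-1})$, not from the previously constructed copies $(Y_0^*, \dots, Y_{i-1}^*)$. I would handle this by a measurability bookkeeping argument. Each $Y_j^*$ with $j < i$ is a function of $(Y_0, \dots, Y_j, U_j)$, so $\sigma(Y_0^*, \dots, Y_{i-1}^*) \subseteq \mathcal H_{i-1} := \sigma(Y_0, \dots, Y_{i-1}, U_0, \dots, U_{i-1})$. Because the uniforms are i.i.d.\ and independent of the process, the block $(U_0, \dots, U_{i-1})$ is independent of $\sigma(Y_0, \dots, Y_i, U_i)$, and $Y_i^*$ is measurable with respect to the latter; combining this with the independence of $Y_i^*$ from $(Y_0, \dots, Y_{i-1})$ supplied by Berbee's lemma shows that the joint law of $(Y_i^*, Y_0, \dots, Y_{i-1}, U_0, \dots, U_{i-1})$ factorizes, so $Y_i^*$ is independent of $\mathcal H_{i-1}$ and hence of $(Y_0^*, \dots, Y_{i-1}^*)$. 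Since each $Y_i^*$ is independent of the $\sigma$-algebra generated by all its predecessors, the standard criterion for mutual independence yields that the entire sequence is independent, completing the proof.
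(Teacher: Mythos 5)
Your proof is correct, but it routes the difficulty differently from the paper, and the difference is instructive. The paper applies Berbee's lemma at step $j$ to the pair consisting of the \emph{constructed copies} $(Y_0^*,\dots,Y_{j-1}^*)$ and the true block $Y_j$: mutual independence of the copies then comes for free from the lemma itself, but the price is that the coupling probability equals $\beta(\sigma(Y_0^*,\dots,Y_{j-1}^*),\sigma(Y_j))$, which involves the auxiliary uniforms and is not directly dominated by $\beta(k)$; the paper handles this by embedding $\sigma(Y_0^*,\dots,Y_{j-1}^*)$ into $\U_j \vee \V_j$ with $\U_j = \sigma(U_1,\dots,U_{j-1})$, $\V_j = \sigma(Y_0,\dots,Y_{j-1})$, and invoking Bradley's Markov-triplet theorem to get $\beta(\U_j \vee \V_j,\sigma(Y_j)) = \beta(\V_j,\sigma(Y_j)) \le \beta(k)$. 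You instead couple against the \emph{true} past $(Y_0,\dots,Y_{i-1})$: the coupling probability is then $\beta(\sigma(Y_0,\dots,Y_{i-1}),\sigma(Y_i)) \le \beta(k)$ immediately from the definition of $\beta(k)$ as a supremum, and the burden shifts to mutual independence of the copies --- exactly the obstacle you identified --- which your factorization argument resolves correctly: $(Y_i^*,Y_0,\dots,Y_{i-1})$ is measurable with respect to $\sigma(Y_0,\dots,Y_i,U_i)$, which is independent of $\sigma(U_0,\dots,U_{i-1})$ by the joint independence of the uniforms from the process, and combined with the Berbee independence $Y_i^* \perp (Y_0,\dots,Y_{i-1})$ a $\pi$-system argument yields $Y_i^* \perp \sigma(Y_0,\dots,Y_{i-1},U_0,\dots,U_{i-1}) \supseteq \sigma(Y_0^*,\dots,Y_{i-1}^*)$. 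The two proofs are mirror images: both rest on the same structural fact (the uniforms are jointly independent of the process), but the paper spends it on the mixing-coefficient comparison while you spend it on the independence verification; your route is arguably more elementary since it avoids Bradley's Markov-triplet result, using only the grouping property of independence and a monotone-class argument. Two small remarks: your construction does require the uniforms to be \emph{jointly} independent of the process (independence of each $U_i$ separately would not suffice for the factorization --- the paper's own phrasing is loose on the same point), and both proofs share the same implicit index bookkeeping in identifying the inter-block gap with the argument $k$ of $\beta(k)$.
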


\begin{lem}[\cite{berbee1979random}]\label{lem:berbee}
 Let $X$ and $Y$ be two random variables taking values in Borel spaces $\S_1$ and $\S_2$ respectively. Denote by $U$ a random variable uniformly distributed over $[0,1]$, which is independent of $(X,Y)$. There exists a random variable $Y^* = g(X,Y,U)$ where $g: \S_1\times \S_2 \times [0,1] \to \S_2$ such that 
$Y^*$ is independent of $X$ and has the same distribution as $Y$, and that $\Pr(Y^* \neq Y) = \beta(\sigma(X),\sigma(Y))$. 
\end{lem}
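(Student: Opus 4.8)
The plan is to build the independent sequence $Y_0^*, Y_1^*, \dots$ one block at a time by a forward recursion, applying Berbee's coupling lemma (Lemma~\ref{lem:berbee}) at each step with the entire collection of preceding blocks playing the role of the variable $X$. First I would enlarge the underlying probability space to carry an i.i.d.\ sequence $U_0, U_1, \dots$ of uniform $[0,1]$ variables that is independent of the whole process $(X_t)_{t \in \N}$; these supply the external randomization required by Lemma~\ref{lem:berbee}. Set $Y_0^* := Y_0$. Given that $Y_0^*, \dots, Y_{i-1}^*$ have been constructed as measurable functions of $(Y_0, \dots, Y_{i-1}, U_0, \dots, U_{i-1})$, I would apply Lemma~\ref{lem:berbee} with $X = (Y_0, \dots, Y_{i-1})$, taking values in the Borel space $(\R^{\ell})^{i}$, with $Y = Y_i$, and with the fresh uniform $U_i$, to obtain $Y_i^* = g_i(Y_0,\dots,Y_{i-1},Y_i,U_i)$ that is independent of $(Y_0, \dots, Y_{i-1})$, has the same law as $Y_i$, and satisfies $\Pr(Y_i^* \neq Y_i) = \beta(\sigma(Y_0,\dots,Y_{i-1}), \sigma(Y_i))$.

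Next I would derive the per-block error bound, which is immediate from the structure of the blocks. Since each $Y_j$ is $\sigma(X_t : 0 \le t \le e_{i-1})$-measurable for $j \le i-1$, where $e_{i-1}$ is the largest time index occurring in $Y_{i-1}$, while $Y_i$ is $\sigma(X_t : t \ge b_i)$-measurable with $b_i$ its smallest index, and the blocks are separated by a temporal gap of $k$, monotonicity of the $\beta$-dependence under passing to sub-$\sigma$-algebras together with the definition of $\beta(k)$ as the supremum over all admissible split points yields
\[
\beta(\sigma(Y_0,\dots,Y_{i-1}), \sigma(Y_i)) \le \beta\bigl(\sigma(X_t : 0 \le t \le e_{i-1}),\sigma(X_t : t \ge b_i)\bigr) \le \beta(k),
\]
hence $\Pr(Y_i^* \neq Y_i) \le \beta(k)$ for every $i$. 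Stationarity guarantees that every $Y_i$, and hence every $Y_i^*$, has the common block law, so the distributional claim holds as well.

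The main obstacle is establishing that the $Y_i^*$ are \emph{mutually} independent, rather than merely each independent of the raw past blocks, which is all that Lemma~\ref{lem:berbee} delivers directly. The point is that $(Y_0^*, \dots, Y_{i-1}^*)$ is a function not only of the raw blocks $(Y_0,\dots,Y_{i-1})$ but also of the auxiliary randomness $(U_0,\dots,U_{i-1})$, so Berbee's conclusion $Y_i^* \perp (Y_0,\dots,Y_{i-1})$ must be upgraded to $Y_i^* \perp (Y_0,\dots,Y_{i-1},U_0,\dots,U_{i-1})$. I would isolate this in a short independence lemma: writing $A = (Y_0,\dots,Y_i,U_i)$ and $B = (U_0,\dots,U_{i-1})$, we have $A \perp B$ by the joint independence of the uniforms from the process and from each other, while $Y_i^* = g_i(A)$ and $T := (Y_0,\dots,Y_{i-1})$ are both $\sigma(A)$-measurable with $Y_i^* \perp T$ by Berbee. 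A direct factorization of $\E[f(Y_i^*)\,g(T)\,h(B)]$ — using $B \perp (Y_i^*, T)$ followed by $Y_i^* \perp T$ — then gives $Y_i^* \perp (T, B)$, and since $(Y_0^*,\dots,Y_{i-1}^*)$ is $\sigma(T,B)$-measurable, $Y_i^* \perp (Y_0^*,\dots,Y_{i-1}^*)$. Induction over $i$ promotes these step-wise independences to full mutual independence of the sequence $(Y_i^*)_{i \in \N}$, completing the argument. The only care needed elsewhere is to ensure the spaces entering Lemma~\ref{lem:berbee} are Borel, which holds as they are finite products of copies of $\R$.
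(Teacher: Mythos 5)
Your proposal does not prove the stated lemma: it proves Lemma~\ref{lem:ourberbee} instead, and it does so by invoking Lemma~\ref{lem:berbee} --- the very statement under consideration --- as a black box at every inductive step. As an argument for the statement it is therefore circular and contains no content toward it: the existence of a measurable $g:\S_1\times\S_2\times[0,1]\to\S_2$ with $Y^*=g(X,Y,U)$ independent of $X$, equidistributed with $Y$, and $\Pr(Y^*\neq Y)=\beta(\sigma(X),\sigma(Y))$ is exactly what you assume. (The paper itself does not prove this lemma either; it quotes it from Berbee, 1979.) A genuine proof requires different ingredients entirely: the identification of $\beta(\sigma(X),\sigma(Y))$ in \eqref{eq:beta-dep} with the total-variation distance between the joint law of $(X,Y)$ and the product of its marginals; regular conditional distributions $P_{Y\mid X=x}$, which exist because $\S_2$ is a Borel space; and a maximal-coupling construction in which the uniform $U$ is split to first decide, with conditional probability equal to the mass of the overlap measure $P_{Y\mid X=x}\wedge P_Y$ evaluated along the realized $Y$, whether to set $Y^*=Y$, and otherwise to draw $Y^*$ from the normalized defect measure $(P_Y-P_{Y\mid X=x})^+$ via a quantile transform. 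The Borel hypothesis is what makes all of this realizable as a single measurable function of $(X,Y,U)$, and the exactness $\Pr(Y^*\neq Y)=\beta$ (rather than an inequality) is precisely the maximality of the coupling. None of these steps appears in your proposal.

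As a secondary remark: read as a proof of Lemma~\ref{lem:ourberbee}, your argument is essentially correct and differs mildly from the paper's. The paper applies Lemma~\ref{lem:berbee} with the \emph{starred} past $(Y_0^*,\dots,Y_{j-1}^*)$ in the role of $X$, and then must bound $\beta(\sigma(Y_0^*,\dots,Y_{j-1}^*),\sigma(Y_j))$ by embedding this $\sigma$-algebra into $\U_j\vee\V_j$ and invoking a Markov-triplet identity (\cite[Vol.~1, Theorem 7.2]{bradley2007introduction}) to discard the uniforms. You instead condition on the \emph{raw} past blocks, obtaining the bound $\beta(k)$ immediately by monotonicity, and then upgrade the conclusion $Y_i^*\perp(Y_0,\dots,Y_{i-1})$ to independence from the uniforms by a direct factorization over product test functions (which extends to the full product $\sigma$-algebra by a $\pi$-$\lambda$ argument, a point you correctly leave implicit). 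Your route is arguably more elementary, trading Bradley's Markov-triplet machinery for a short independence computation --- but it answers a different question from the one posed.
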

\begin{proof}[Proof of Lemma~\ref{lem:ourberbee}]
Let $U_j,~j \in \N$ be a sequence of i.i.d. random variables uniformly distributed over $[0,1]$ such that each $U_j$ is independent of $\sigma(\{Y_i: i \in {\N}\})$.
Set $Y^*_0=Y_0$. 
By Lemma~\ref{lem:berbee} there exists a random variable $Y^*_1 = g_1(Y^*_0,Y_1,U_1)$ where $g_1$ is a measurable function from $\R^\ell \times \R^\ell \times [0,1]$ to $\R^\ell$
such that $Y^*_1$ is independent of $Y^*_0$, has the same distribution as $Y_1$ and 
$\Pr(Y^*_1 \neq Y_1) = \beta(\sigma(Y^*_0),\sigma(Y_1))$. 
Similarly, 
there exists a random variable 
$Y^*_2 = g_2((Y^*_0,Y^*_1),Y_2,U_2)$ where $g_2$ is a measurable function from $(\R^{\ell})^2 \times \R^{\ell} \times [0,1]$ to $ \R^{\ell}$ such that $Y^*_2$ is independent of $(Y^*_0,Y^*_1)$, has the same distribution as $Y_2$ and 
$
    \Pr(Y^*_2 \neq Y_2) = \beta(\sigma(Y^*_0,Y^*_1),\sigma(Y_2))
$.
Continuing inductively in this way, at each step $j=3,4,\dots$, by Lemma~\ref{lem:berbee}, there exists a random variable 
$Y^*_j = g_j((Y^*_0,Y^*_1,\dots,Y^*_{j-1}),Y_j,U_j)$ where $g_j$ is a measurable function from $(\R^{\ell})^j \times \R^{\ell} \times [0,1]$ to $ \R^{\ell}$ such that $Y^*_j$ is independent of $(Y^*_0,Y^*_1,\dots,Y^*_{j-1})$, has the same distribution as $Y_j$ and that 
$ 
    \Pr(Y^*_j \neq Y_j) = \beta(\sigma(Y^*_0,Y^*_1,\dots,Y^*_{j-1}),\sigma(Y_j)).
$ 
It remains to show that 
$
\beta(\sigma(Y^*_0,Y^*_1,\dots,Y^*_{j-1}),\sigma(Y_j))\leq \beta(k)
$ for all $j \in \N $.
To see this, first note that $Y^*_0 = Y_0$ by definition, and that for each $i \in \N$, it holds that $Y^*_i \in \sigma((Y_0^*,Y^*_1,\dots,Y^*_{i-1}),Y_i,U_i)$, we have
\begin{align}\label{eq:biggersigma}
\sigma(Y_0^*,Y^*_1,\dots,Y^*_{j-1}) &\subseteq \U_j \vee \V_j 
\end{align}
where $\U_j = \sigma(U_1,\dots,U_{j-1})$ and 
$\V_j = \sigma(Y_0,Y_1,\dots,Y_{j-1})$.
Take any $U \in \U_j$ and $W\in \sigma(Y_j)$. We almost surely have,
\begin{align*}
    P(U \cap W| \V_j) 
    &= \E\left ( \one_U \one_W|\V_j\right )\\
    &= \E\left (\E\left (\one_U \one_W|\V_j \vee \sigma(Y_j)\right )|\V_j \right) & \text{since $\V_j \subseteq \sigma(Y_j) \vee \V_j$}\\
    & = \E\left (\one_W\E\left (\one_U |\V_j \vee \sigma(Y_j)\right )|\V_j \right) & \text{since $W \in \sigma(Y_j)$}\\
    & = \E\left (\one_W\E\left (\one_U \right)|\V_j \right) & \text{since $\U_j$ is independent of $\sigma(Y_0,\dots,Y_j)$}\\
    &=P(U)P(W|\V_j) \\
    &=P(U|\V_j)P(W|\V_j) &\text{since $\U_j$ is independent of $\V_j$}
\end{align*}
Therefore,  and $\U_j$,~$\V_j$ and $\sigma(Y_j)$ form a Markov triplet in the sense of \cite[Vol. 1 Definition 7.1 pp. 205]{bradley2007introduction}. Thus, as follows from \cite[Vol. 1 Theorem 7.2 pp. 205]{bradley2007introduction} we obtain,
\begin{equation}\label{eq:upper-bound-beta}
    \beta(\U_j \vee \V_j, \sigma(Y_j)) = \beta(\V_j, \sigma(Y_j)).
\end{equation}
In light of \eqref{eq:biggersigma} and \eqref{eq:upper-bound-beta}, 
and noting that by construction $\beta(\V_j, \sigma(Y_j))\leq \beta(k)$ we obtain
\begin{align*}
\beta(\sigma(Y^*_0,Y^*_1,\dots,Y^*_{j-1}),\sigma(Y_j)) \leq \beta(\U_j \vee \V_j, \sigma(Y_j)) \leq \beta(\V_j, \sigma(Y_j)) \leq \beta(k).
\end{align*}
\end{proof}

\begin{proof}[Proof of Theorem~\ref{lem:expected_beta_error}]
Given the sample $X_0,\dots,X_n$, consider the sequence \[Z_i = ( X_{2i(k+1)},X_{2i(k+1)+m})\] with $i =0,1,2,\dots, N-1$ where $N=N(k,n):=\lfloor \frac{n-k}{2(k+1)} \rfloor $ for some fixed $k \in m+1,\dots,\lfloor n/8 \rfloor$. {\em As part of the proof, we propose an optimal choice for $k$, see \eqref{eq:kstar}.}
Enlarge $\Omega$ if necessary in order for Lemma~\ref{lem:ourberbee} to be applicable. 
As follows from Lemma~\ref{lem:ourberbee} there exists a sequence of independent random variables $Z_i^*,~i=0,1,\dots,N-1$ each of which takes value in $\R^2$ and has the same distribution as $Z_i,~i=0,1,\dots,N-1$, with the additional property that
\begin{equation}\label{eq:ourb}
    \Pr\left (\{\exists i \in 0,\dots,N-1: Z_i^* \neq Z_i\}\right) \leq N\beta(k)
\end{equation}
Define the KDE of $f_m$ through $Z_i^*,~i=0,\dots,N-1$
\[
\hhat{f}_{m,N}(z) = \frac{1}{N h_N^2}\sum_{i=1}^{N}K\left (\frac{z-Z^*_i}{h_N}\right)
\]
with the same kernel  $K: \R^2 \to \R$ and bandwidth $h_N >0$ as in \eqref{eq:kde} and let
\begin{equation}\label{eq:beta_m_n_star}
\hhat{\beta}_{N}(m)=\frac{1}{2}\int_{\R^2}|\hhat{f}_{m,N}-\hhat{f}_{0,N}\otimes\hhat{f}_{0,N}|d\lambda_2
\end{equation}
where $\hhat{f}_{0,N}(x):=\int_{\R} \hhat{f}_{m,N}(x,y) d\lambda(y)$.

\begin{align}
    &\|f_0\otimes f_0-\hhat{f}_{0,N}\otimes \hhat{f}_{0,N}\|_1 \nonumber \\
    &= 
    \int_{x}\int_{y}|f_0(x)f_0(y)-\hhat{f}_{0,N}(x)\hhat{f}_{0,N}(y)|d\lambda(x)d\lambda(y) \nonumber \\
    &=\int_{x}\int_{y}|f_0(x)f_0(y)-\hhat{f}_{0,N}(x)f_0(y)+\hhat{f}_{0,N}(x)f_0(y)-\hhat{f}_{0,N}(x)\hhat{f}_{0,N}(y)|d\lambda(x)d\lambda(y)\nonumber \\
    &\leq \int_{y}f_0(y)\int_{x}|f_0(x)-\hhat{f}_{0,N}(x)| d\lambda(x) d\lambda(y) + \int_{x}|\hhat{f}_{0,N}(x)|\int_{y}|f_0(y)-\hhat{f}_{0,N}(y)| d\lambda(y) d\lambda(x) \nonumber\\
    &\leq (1+c_0)\|f_0-\hhat{f}_{0,N}\|_1  \label{eq:f0f0}
\end{align}
where $c_0=\int_{\R^2}|K(z)|d\lambda_2(z)$ as specified in the theorem statement. It follows that 
\begin{align}
 |\beta(m)-\hhat{\beta}_N(m)| &= 
 \frac{1}{2}\left |\int |f_m - f_0\otimes f_0| d\lambda_2 - \int |\hhat{f}_{m,N} - \hhat{f}_{0,N}\otimes \hhat{f}_{0,N}| d\lambda_2 \right|\nonumber \\
 & \leq \frac{1}{2} \int |f_m - \hhat{f}_{m,N}| d\lambda_2 +\frac{1}{2}\int |f_0\otimes f_0-\hhat{f}_{0,N}\otimes \hhat{f}_{0,N}|d\lambda_2 \nonumber \\
 &=\frac{1}{2} \|f_m - \hhat{f}_{m,N}\|_1 + \frac{1}{2}\|f_0\otimes f_0-\hhat{f}_{0,N}\otimes \hhat{f}_{0,N}\|_1 \nonumber \\
 &\leq \frac{1}{2} \left(\|f_m - \hhat{f}_{m,N}\|_1+(1+c_0)\|f_0-\hhat{f}_{0,N}\|_1\right) \label{eq:diffs}
\end{align}
where \eqref{eq:diffs} follows from \eqref{eq:f0f0}.
Next, it is straightforward to check that if $f_m \in B_{1\infty}^s(\R^2)$ with $\|f\|_{B_{1\infty}^s(\R^2)} \leq \varLambda$, then $f_0 = \int_\R f_m d\lambda \in B_{1\infty}^s(\R)$ with $\|f\|_{B_{1\infty}^s(\R)} \leq \varLambda$. 
Moreover,  observe that, as follows from Remark~\ref{rem}, for all $f \in B_{1\infty}^s(\R)$ and $g \in B_{1\infty}^s(\R^2)$ we have  $\|f\|_{W_{1}^{[s]}(\R)}\leq \|f\|_{B_{1,\infty}^{s}(\R)}$ and $\|g\|_{W_{1}^{[s]}(\R^2)} \leq \|g\|_{B_{1,\infty}^{s}(\R^2)}$.
Therefore, with the 
choice of bandwidth $h$ specified in the theorem statement, 
by \cite[Proposition 4.1.5 and Proposition 4.3.33]{nickl} and an argument analogous to that of \cite[Proposition 5.1.7]{nickl}, we obtain,
\begin{align}\label{eq:norm1}
\sup_{f_0: \|f\|_{B_{1\infty}^s(\R)} \leq \varLambda} \E \|\hhat{f}_{0,N}-f_0\|_1 \leq \widetilde{C} N^{-\frac{[s]}{2[s]+2}}
\end{align}
where 
$\widetilde{C} = 2(L_1)^{\frac{[s]}{[s]+1}}(c\varLambda)^{\frac{1}{[s]+1}}$.
Similarly, by \cite[pp. 404]{nickl} we have,
\begin{align}\label{eq:norm2}
\sup_{f_m: \|f\|_{B_{1\infty}^s(\R^2)} \leq \varLambda} \E \|\hhat{f}_{m,N}-f_m\|_1 \leq \widetilde{C} N^{-\frac{[s]}{2[s]+2}}
\end{align}
Set $C:=(2+c_0)\widetilde{C}/2$. 
Define the event $E:=\{Z_i^*= Z_i,~i \in 0,\dots,N-1\}$. 
We obtain,
\begin{align}
\E |\beta(m)-\hat{\beta}_N(m)| 
&\leq \E |\beta(m)-\hhat{\beta}_N(m) |+\E [|\hhat{\beta}_N(m) - \hat{\beta}_N(m)|\,| E^c]\Pr(E^c) \label{eq:beta_exp_diff00} \\
&\leq\E |\beta(m)-\hhat{\beta}_N(m) | + 2 N \beta(k)  \label{eq:beta_exp_diff0}\\
&\leq C N^{-\frac{[s]}{2[s]+2}}+2N\beta(k) \label{eq:beta_exp_diff1} \\
&\leq C N^{-\frac{[s]}{2[s]+2}} +2N\eta e^{-\gamma k} \label{eq:beta_exp_diff2}\\
&\leq C \left (\frac{4k}{n-4k}\right ) ^{\frac{[s]}{2[s]+2}} +\frac{n}{k}\eta e^{-\gamma k} \label{eq:beta_exp_diff3}\\ 
&\leq C \left (\frac{8k}{n}\right ) ^{\frac{[s]}{2[s]+2}} +\frac{n}{k}\eta e^{-\gamma k}  \label{eq:beta_exp_diff4} \\
&\leq \frac{8Ck}{n^{\frac{[s]}{2[s]+2}}}   +\eta n e^{-\gamma k}
\label{eq:bound}
\end{align}
where 
\eqref{eq:beta_exp_diff00} follows from triangle inequality and observing that under $E$ the estimators $\hat{\beta}_N$ and $\hhat{\beta}_N$ are equal,
\eqref{eq:beta_exp_diff0} follows from \eqref{eq:ourb},
\eqref{eq:beta_exp_diff1} follows from
\eqref{eq:diffs},\eqref{eq:norm1} and \eqref{eq:norm2},
\eqref{eq:beta_exp_diff2} follows from observing that $\beta(k)\leq 1$ and the geometric ergodicity of the process, and \eqref{eq:beta_exp_diff3} and  \eqref{eq:beta_exp_diff4} follow from the definition of $N$ and the fact that $2 \leq k \leq \lfloor n/8\rfloor $. 
Optimizing \eqref{eq:bound} for $k$ we obtain

\begin{equation}\label{eq:kstar}
k^{\star}=\left \lfloor \frac{1}{\gamma}\left (\log  \frac{\gamma  \eta}{8C} + \left (\frac{3[s]+2}{2[s]+2}\right )\log n \right )\right \rfloor
\end{equation}
which in turn leads to 
\begin{align}
\E |\beta(m)-\hhat{\beta}_N(m)| 
 &\leq \frac{8Cn^{-\frac{[s]}{2[s]+2}}}{\gamma}\left ( e^\gamma+\log  \frac{\gamma  \eta}{8C} + \left (\frac{3[s]+2}{2[s]+2}\right )
 \log n  \right) 
\end{align}
This completes the proof of the bound on the expected error. 
For the high probability bound observe that \cite[Theorem 5.1.13]{nickl} states that  for all $t>0$,
\begin{equation}
\Pr\bigl(N \|\hhat{f}_{m,N}-\E(\hhat{f}_{m,N}) \|_1 \geq (3/2) N \E \|\hhat{f}_{m,N}-\E(\hhat{f}_{m,N}) \|_1 + \sqrt{2 N t} \|K\|_1 + t 5 \|K\|_1  \bigr)  \leq e^{-t}.
\end{equation}
Furthermore, $\E(\hhat{f}_{m,N}) = K_{h_N} * f_m$, where $K_h(x) = (1/h) K(x/h)$ for $h>0, x \in \R^2$. Hence,
\begin{equation} \label{eq:high_prob_cont}
\|\hhat{f}_{m,N}- f_m \|_1  \leq \|\hhat{f}_{m,N}-\E(\hhat{f}_{m,N}) \|_1 + \|K_{h_N} * f_m - f_m\|_1.
\end{equation}
The latter term can be bounded by using \cite[Proposition 4.3.33]{nickl},
\begin{equation} \label{eq:corr_sob}
\|K_{h_N} * f_m - f_m\|_1 \leq h_N^{[s]} \|f_m\|_{W_1^{[s]}(\R^2)},
\end{equation}
Note that there is a typo in \cite[Proposition 4.3.33]{nickl} which states the result as in the one-dimensional case. The inequality \eqref{eq:corr_sob} relies on the remainder term of a Taylor series. The remainder term in $\|\cdot\|_1$-norm is upper bounded by (Minkowski inequality for integrals)
\[
[s] h_N^{[s]}  \sum_{|\alpha| = [s]} \frac{1}{\alpha!} \int_0^1 (1-u)^{[s]-1} \| D^\alpha f_m\|_1  \, du  \leq 
[s] h_N^{[s]} \int_0^1 (1-u)^{[s]-1} du \|f_m\|_{W_1^{[s]}(\R^2)}, 
\]
where $\alpha =(\alpha_1,\alpha_2)$ is  multi-index of dimension $2$, $\alpha! = \alpha_1! \alpha_2!$, the integral $[s] \int_0^1 (1-u)^{[s]-1}$ is equal to $1$, and $D^\alpha f_m$ is a weak-derivative of $f_m : \R^2 \to \R$.
Hence, 
\begin{align*}
N \|\hhat{f}_{m,N}- f_m \|_1
\leq N \|\hhat{f}_{m,N}-\E(\hhat{f}_{m,N}) \|_1  + N \Lambda h_N^{[s]}  
\end{align*}
and with probability at least $1-e^{-u}$, 
\begin{align*}
N \|\hhat{f}_{m,N}- f_m \|_1
\leq N  \Lambda h_N^{[s]} + 
(3/2) N \E \|\hhat{f}_{m,N}-\E(\hhat{f}_{m,N}) \|_1 + \sqrt{2 N u} \|K\|_1 + u 5 \|K\|_1.
\end{align*}
Recall that 
$\|f_m\|_{W_1^{[s]}(\R^2)} \leq \|f_m\|_{B_{1,\infty}^2(\R^2)}$ and, therefore, from \eqref{eq:norm2} it follows that for any $f_m$ such that $\|f_m\|_{B_{1,\infty}^2(\R^2)}\leq \varLambda $, with probability $1-e^{-u}$ we have,
\[
 \|\hhat{f}_{m,N}- f_m \|_1
\leq   \Lambda h_N^{[s]} + 
(3/2)   \widetilde{C} N^{-\frac{[s]}{2[s]+2}}
+ \sqrt{2  u/N} \|K\|_1 + u 5 \|K\|_1/N.
\]
Substituting $h_N$ as stated in Condition \ref{ass}.ii, yields that with probability $1-e^{-u}$,
\begin{align*}
 &\|\hhat{f}_{m,N}- f_m \|_1   \\
& \leq \left((3/2)\widetilde{C} +  \varLambda (c\varLambda)^{-\frac{[s]^2}{[s]+1}} \left(\frac{[s]-1}{2}\right)^{-\frac{[s]^2}{2[s]+2}} \right)
N^{-\frac{[s]}{2[s]+2}}
+ \sqrt{2  u/N} \|K\|_1 + u 5 \|K\|_1/N.
\end{align*}
Similarly, we can bound the difference between $\hat f_{0,N}^*$ and $f_0$ in high probability; for $u > 0$, with probability $1-e^{-u}$, it follows from \eqref{eq:f0f0} that
\begin{align*}
 &\|f_0\otimes f_0-\hhat{f}_{0,N}\otimes \hhat{f}_{0,N}\|_1/ (1+c_0) \\ 
 &\leq \|\hhat{f}_{0,N}- f_0 \|_1 
  \\
&\leq\left((3/2)\widetilde{C} +  \varLambda (c\varLambda)^{-\frac{[s]^2}{[s]+1}} \left(\frac{[s]-1}{2}\right)^{-\frac{[s]^2}{2[s]+2}} \right)
N^{-\frac{[s]}{2[s]+2}}
+ \sqrt{2  u/N} \|K\|_1 + u 5 \|K\|_1/N.
\end{align*}
Substituting this into \eqref{eq:diffs} gives that with probability at least $1-2 e^{-u}$,
\begin{align}\label{eq:azaladdedthis}
&|\beta(m)-\hhat{\beta}_N(m)|/(1+c_0/2) \nonumber \\
&\leq \left((3/2)\widetilde{C} +  \varLambda (c\varLambda)^{-\frac{[s]^2}{[s]+1}} \left(\frac{[s]-1}{2}\right)^{-\frac{[s]^2}{2[s]+2}} \right)
N^{-\frac{[s]}{2[s]+2}}
+ \sqrt{2  u/N} \|K\|_1 + u 5 \|K\|_1/N.
\end{align}
Furthermore, with probability 
\[1-N \beta(k^{\star}) \geq 1 - 8 C  
n^{-\frac{[s]}{2[s] + 2}} 
\]
$\hhat{f}_{0,N} = \hat{f}_{0,N}$ and $\hhat{f}_{m,N} = \hat{f}_{m,N}$. By setting $u= \log(n) [s]/(2[s]+2)$ in \eqref{eq:azaladdedthis} we observe that with probability at least $1 - 8 C  
n^{-\frac{[s]}{2[s] + 2}} $
it holds that 
\begin{align*}
&|\beta(m)-\hat{\beta}_N(m)|/(1+c_0/2) \\
&\leq \left(\frac{3}{2}\widetilde{C} + \varLambda (c\varLambda)^{-\frac{[s]^2}{[s]+1}} \left(\frac{[s]-1}{2}\right)^{-\frac{[s]^2}{2[s]+2}} \right)
N^{-\frac{[s]}{2[s]+2}}
+ \sqrt{\frac{[s] \log(n)}{([s]+1)N}} \|K\|_1 + \frac{5[s] \log(n) |K\|_1}{(2[s]+2) N}.
\end{align*}
Finally, observing that $N \geq (n/4k^{\star}) -1$, and noting that 
$16N^{-\frac{[s]}{2[s]+2}} \geq (N+1)^{-\frac{[s]}{2[s]+2}}$ whenever $N \geq 2$ and $[s]\geq 1$ we obtain,
\begin{align*}
&|\beta(m)-\hat{\beta}_N(m)|  \\
&\leq
16 \bigl(1+ \frac{c_0}{2}\bigr) 
\Biggl(\frac{3}{2}\widetilde{C} +  \varLambda (c\varLambda)^{-\frac{[s]^2}{[s]+1}} \left(\frac{[s]-1}{2}\right)^{-\frac{[s]^2}{2[s]+2}}  \\
&\hspace{3cm} + \left(\sqrt{\frac{[s]}{([s]+1)}} + \frac{5[s] }{(2[s]+2)}\right) \log(n) |K\|_1  \Biggr)
\left(\frac{n}{4k^{\star}}\right)^{-\frac{[s]}{2[s]+2}}.
\end{align*}
Noting that $(4k^{\star})^\frac{[s]}{2[s]+2} \leq 4k^{\star}$ and letting
\[C_1  = \frac{1}{\gamma} \log\left(\frac{\gamma \eta}{8C}\right) \left(\frac{3}{2}\widetilde{C} +  \varLambda (c\varLambda)^{-\frac{[s]^2}{[s]+1}} \left(\frac{[s]-1}{2}\right)^{-\frac{[s]^2}{2[s]+2}}\right)\] and 
\[C_2=\frac{4\|K\|_1}{\gamma} \log\left(\frac{\gamma \eta}{8C}\right)  + \frac{3}{2\gamma} \left(\frac{3}{2}\widetilde{C} +  \varLambda (c\varLambda)^{-\frac{[s]^2}{[s]+1}} \left(\frac{[s]-1}{2}\right)^{-\frac{[s]^2}{2[s]+2}} \right),\]
we obtain 
\[
|\beta(m)-\hat{\beta}_N(m)| \leq 64 \bigl(1+ \frac{c_0}{2}\bigr) (C_1 + C_2 \log(n) + \frac{6\|K\|_1}{\gamma} \log^2(n)) n^{-\frac{[s]}{2[s]+2}}.\]
\end{proof}

\begin{proof}[Proof of Theorem~\ref{prop:beta_d}]
As in the proof of Theorem~\ref{lem:expected_beta_error}, we use a coupling argument together with concentration bounds on independent copies. More specifically, consider the geometrically ergodic Markov sample  $X_0,\dots,X_n$, and define the sequence of tuples $Z_i = ( X_{2i(k+1)},X_{2i(k+1)+m})$,~$i =0,1,2,\dots, N-1$ where $N=N(k,n):=\lfloor \frac{n-k}{2(k+1)} \rfloor $ for some fixed $k \in m+1,\dots,\lfloor n/8 \rfloor$; as in the continuous state-space setting, an optimal choice for $k$ is specified later in the proof, see \eqref{eq:kstar2}. 
As follows from Lemma~\ref{lem:ourberbee} there exists a sequence of independent random vectors $Z_i^*=(Z_{i,1}^*,Z_{i,2}^*)$ for $i=0,1,\dots,N-1$ each of which takes value in $\X^2$ and has the same distribution as $Z_i$ such that
\begin{equation}\label{eq:ourbd}
    \Pr\left (\{\exists i \in 0,\dots,N-1: Z_i^* \neq Z_i\}\right) \leq N\beta(k)
\end{equation} 
Define
\[
\hat{\beta}^*_{N}(m) := \frac{1}{2} \sum_{u \in \X}\sum_{v \in \X}|\hat{P}^*_{m,N}(\{(u,v)\})-\hat{P}^*_{0,N}(\{u\})\hat{P}^*_{0,N}(\{v\})|
\]
where 
$
\hat{P}^*_{m,N}((u,v)):=\frac{1}{N}\sum_{i=0}^{N-1}\mathbf{1}_{\{(u,v)\}}(Z^*_i)
$ 
and 
$ 
\hat{P}^*_{0,N}(u):=\frac{1}{N}\sum_{i=0}^{N}  \mathbf{1}_{\{u\}}(Z^*_{i,1})
$. 
By a simple application of Jensen's inequality and noting that the random variables $Z^*_i,~i=0,\dots,N-1$ are iid, for each $z \in \X \times \X$ we have, 
\begin{align}\label{eq:P_hat_0}
\E[|\hat{P}_{m,N}^*(z) - P_m(z)|] 
&\leq \frac{1}{N}(\sum_{i=0}^{N-1}\E (\mathbf{1}_{\{z\}}(Z^*_i) - \E\mathbf{1}_{\{z\}}(Z^*_i))^2)^{1/2} \leq \sqrt{P_m(z)/N}
\end{align}
where the second inequality is due to $\VARd{\mathbf{1}_{\{z\}}(Z^*_i) - \E\mathbf{1}_{\{z\}}(Z^*_i)} \leq P_m(z)$. 
Similarly, for each $u \in \X$ we obtain
$
\E[|\hat{P}_{0,N}^*(u) - P_0(u)|] 
\leq \sqrt{P_0(u)/N}
$.
It follows that 
\begin{align}
&2 \E|\hat{\beta}^*_{N}(m)-\beta(m)| \nonumber \\
&\leq \sum_{(u,v) \in \X^2} \E|P_m(\{(u,v)\}) - \hat{P}^*_{m,N}(\{(u,v)\})| + 2 \sum_{u\in \X} \E |P_0(\{u\}) - \hat{P}^*_{0,N}(\{u\})|\\
& \leq 2|\X| N^{-1/2}.
\end{align}
In much the same way as in the proof of Theorem~\ref{lem:expected_beta_error}, let $E:=\{Z_i^*= Z_i,~i \in 0,\dots,N-1\}$, and observe that $\E(|\hat{\beta}^*_{N}(m)-\hat{\beta}_{N}(m)|\;| E) = 0$. Moreover, recall that $k \leq \lfloor n/8 \rfloor$. 
We obtain, 
\begin{align}
\E |\beta(m)-\hat{\beta}_N(m)| 
&\leq\E |\beta(m)-\hhat{\beta}_N(m) | + 2 N \beta(k)  \label{eq:beta_exp_diff0d}\\
&\leq |\X| \left ( \frac{4k}{n-4k}\right)^{1/2}+2 n\eta e^{-\gamma k} \nonumber\\
& \leq |\X| \left ( \frac{8k}{n}\right)^{1/2}+2 n\eta e^{-\gamma k} \nonumber\\
&\leq \sqrt{8}|\X| n^{-1/2} k +2 n\eta e^{-\gamma k} \label{eq:boundkb}
\end{align}
where \eqref{eq:beta_exp_diff0} follows from \eqref{eq:ourbd}. Optimizing \eqref{eq:boundkb} we obtain 
\begin{equation}\label{eq:kstar2}
k^{\star} = \left \lfloor \frac{1}{\gamma}\log \left(\frac{\eta\gamma n^{3/2}}{\sqrt{2}|\X| }\right)\right \rfloor
\end{equation}
where $n$ is taken large enough so ensure that $k^{\star} \geq 2$ (see \eqref{eq:goodn} below).
This choice of $k^{\star}$ and $n$ leads to 
\begin{align*}
\E|\hat{\beta}_{N}(m)-\beta(m)|
&\leq \sqrt{8}|\X| n^{-1/2} k^{\star} +2 n\eta e^{-\gamma k^\star}\\
&\leq \sqrt{\frac{8}{n}}  \frac{|\X|}{\gamma}\log \left(\frac{\eta\gamma n^{3/2}}{\sqrt{2}|\X| }\right) +2 n\eta e^{-\gamma (\frac{1}{\gamma}\log \left(\frac{\eta\gamma n^{3/2}}{\sqrt{2}|\X| }\right)-1)}\\
&\leq \frac{\sqrt{8}|\X| n^{-1/2}}{\gamma}\left(e+\log \left(\frac{\eta\gamma}{\sqrt{2}|\X| }\right)+\frac{3}{2}\log n\right).
\end{align*}
Next, take $N=N(k^{\star},n) = \lfloor \frac{n-k^{\star}}{2(k^{\star}+1)} \rfloor$, with $k^{\star}$ given by \eqref{eq:kstar2} and let $n$ be large enough so that
\begin{align}\label{eq:goodn}
n^{3/2} \geq \max \left \{\frac{\sqrt{2}|\X| e^{2\gamma}}{\eta \gamma}, \frac{\eta \gamma}{|\X|} \right\}.
\end{align} 
Substituting for $k^{\star}$ and noting that 
\begin{align*}
N=\left \lfloor \frac{n-k^{\star}}{2(k^{\star}+1)}\right\rfloor 
\leq \frac{\gamma n}{2\log \left(\frac{\eta\gamma n^{3/2}}{\sqrt{2}|\X| }\right)}
\end{align*}
we have,
\begin{align}\label{eq:Nbetak}
N\eta e^{-\gamma k^{\star }} 
\leq \frac{e\sqrt{2}|\X| n^{-1/2}}{2\log(\frac{\eta \gamma n^{3/2}}{\sqrt{2}|\X|}) }.
\end{align}
On the other hand, by Hoeffding's inequality, for any $\epsilon >0$ and each $u \in \X$ we have,
\begin{align}
\Pr(|\hat{P}^*_{0,N}(\{u\}) - P_0(\{u\})| \geq \frac{\epsilon}{2|\X|})
& \leq 2\exp\left \{-\frac{N\epsilon^2}{2|\X|^2}\right \}
\end{align}
Similarly, for each $(u,v) \in \X^2$ it holds that 
\begin{align}
\Pr(|\hat{P}^*_{m,N}(\{(u,v)\}) - P_m(\{(u,v)\})| \geq \frac{\epsilon}{2|\X|^2})
& \leq 2\exp\left \{-\frac{N\epsilon^2}{2|\X|^4}\right \}
\end{align}
It follows that 
\begin{align}
\Pr(|\hat{\beta}^*_{N}(m)&-\beta(m)|  \geq \epsilon/2) \nonumber \\
&\leq \sum_{u,v} \Pr(|P_m(\{(u,v)\}) - \hat{P}^*_{m,N}(\{(u,v)\})| \geq \frac{\epsilon}{2|\X|^2}) \nonumber \\
&\qquad\qquad\qquad+
2 \sum_{u}\Pr(|P_0(\{v\}) - \hat{P}^*_{0,N}(\{u\})| \geq \frac{\epsilon}{2|\X|})\nonumber \\
&\leq 2|\X|^2 \exp\left \{-\frac{N\epsilon^2}{2|\X|^4}\right \}+2|\X|\exp\left \{-\frac{N\epsilon^2}{2|\X|^2}\right \} \nonumber \\
&\leq  4|\X|^2 \exp\left \{-\frac{N\epsilon^2}{2|\X|^4}\right \}\nonumber \\
&\leq 4|\X|^2\exp\{-\frac{(n-4k^{\star})\epsilon^2}{8k^{\star}|\X|^4}\}\label{eq:boundd0}\\
&\leq 4|\X|^2\exp\{-\frac{n\epsilon^2}{16k^{\star}|\X|^4}\} \label{eq:bound1}\\
&=  4|\X|^2\exp\left \{-\frac{\gamma n\epsilon^2}{16 |\X|^4 \left(\log \left(\frac{\eta\gamma}{\sqrt{2}|\X| }\right) + \frac{3}{2}\log n\right)}\right\}\label{eq:indep_bound0} \\
&\leq 4|\X|^2\exp\{-\frac{\gamma n\epsilon^2}{48 |\X|^4 \log n}\}\label{eq:indep_bound}
\end{align}
where, \eqref{eq:boundd0} follows from the choice of $N=\lfloor \frac{n-k^{\star}}{2(k^{\star}+1)}\rfloor $ and noting that $k^{\star} \geq 2$,
\eqref{eq:bound1} follows from recalling that in general, $k$ (and thus also $k^{\star}$), is less than $\lfloor n/8\rfloor $, and finally, \eqref{eq:indep_bound0}  and \eqref{eq:indep_bound} follow from substituting the value of $k^{\star}$ as given by \eqref{eq:kstar} and  observing that by \eqref{eq:goodn} we have $
\frac{3}{2}\log n \geq \log(\frac{\eta \gamma }{|\X|})$.
Hence, by \eqref{eq:ourbd}, \eqref{eq:Nbetak} and \eqref{eq:indep_bound} we obtain,
\begin{align*}
\Pr(|\hat{\beta}_{N}(m)-\beta(m)| \geq \epsilon ) 
&\leq 
N\beta(k^{\star}) + 
\Pr(|\hat{\beta}^*_{N}(m)-\beta(m)| \geq \epsilon/2 ) \\
&\leq  \frac{e\sqrt{2}|\X| n^{-1/2}}{2\log\left (\frac{\eta \gamma n^{3/2}}{\sqrt{2}|\X|}\right) }+4|\X|^2\exp\left \{-\frac{\gamma n\epsilon^2}{48 |\X|^4 \log n}\right \}
\end{align*}
\end{proof}
\begin{proof}[Proof of Theorem~\ref{prop:beta_dvc}]
As in the proof of Theorem~\ref{prop:beta_d}, we start by a coupling argument, with the difference that instead of generating $2$-tuples, we generate blocks of length $k+1$ for an appropriate value of $k$ which we specify further in the proof. Specifically, given $X_0,\dots,X_n$  define \[\tilde{Z}_i = ( X_{2i(k+1)},X_{2i(k+1)+1},\dots,X_{(2i+1)k+2i})\]
for $i =0,1,\dots, N-1$ where $N=N(k,n):=\lfloor \frac{n-k}{2(k+1)} \rfloor $ for some fixed $k \in 1,\dots,\lfloor n/8 \rfloor$; an optimal choice for $k$ is specified later in the proof, see \eqref{eq:kstar2_vc}. 
By Lemma~\ref{lem:ourberbee} there exists a sequence of independent random vectors $\tilde{Z}_i^{*}=(\tilde{Z}_{i,0}^{*},\dots,\tilde{Z}_{i,k}^{*})$ for $i=0,1,\dots,N-1$ each of which takes value in $\X^{k+1}$ and has the same distribution as $\tilde{Z}_i$ such that
\begin{equation}\label{eq:ourbdvc}
    \Pr\left (\{\exists i \in 0,\dots,N-1: \tilde{Z}_i^{*} \neq \tilde{Z}_i\}\right) \leq N\beta(k).
\end{equation}
Define
\[
\hat{\beta}^{\dagger}_{N}(m) :=\sum_{u \in \X}\sum_{v \in \X}|\hat{P}^{\dagger}_{m,N}(\{(u,v)\})-\hat{P}^{\dagger}_{0,N}(\{u\})\hat{P}^{\dagger}_{0,N}(\{v\})|
\]
where 
$
\hat{P}^{\dagger}_{m,N}((u,v)):=\frac{1}{N}\sum_{i=0}^{N-1}\mathbf{1}_{\{(u,v)\}}(\tilde{Z}^{*}_{i,0},\tilde{Z}^{*}_{i,m})
$ 
and 
$ 
\hat{P}^{\dagger}_{0,N}(u):=\frac{1}{N}\sum_{i=0}^{N}  \mathbf{1}_{\{u\}}(\tilde{Z}^{*}_{i,0})
$. 
As in the proof of Theorem~\ref{prop:beta_d}, for each $u \in \X$ it holds that 
\begin{align}\label{eq:P_hat_0_vc}
\E[|\hat{P}_{0,N}^{\dagger}(u) - P_0(u)|] \leq \sqrt{P_0(u)/N} \leq N^{-1/2}.
\end{align}
Define the class of indicator functions
\[
\mathcal H_k = \{h_{m,z}: \X^{k+1} \to \{0,1\}: z \in \X \times \X,~h_{m,z}(\x):=\mathbf{1}_{\{z\}}(x_0,x_m),~m=1,\dots,k \}.
\]
One can verify that the VC dimension of $\mathcal{H}_k$ is at most $\log_2(|\mathcal{X}| k)$. To see this, note first that the indicator functions in $\mathcal{H}_k$ depend on only two coordinates at a time, rather than on all coordinates simultaneously. Consequently, the VC dimension of $\mathcal{H}_k$ will typically exceed one but remain limited by the combinatorial structure of these pairwise tests.
The upper bound follows from the standard result that the VC dimension of a finite hypothesis class $\mathcal{H}_k$ is at most $\log_2 |\mathcal{H}_k|$. In our case, the class $\mathcal{H}_k$ has cardinality $|\mathcal{X}|^2 k$. However, since the first coordinate can be fixed to a constant value (say, $1$), we can restrict attention to a subclass of functions of size $|\mathcal{X}| k$ that can shatter as many points as $\mathcal{H}_k$. Therefore, the VC dimension of $\mathcal{H}_k$ is bounded by $\log_2 (|\mathcal{X}| k)$.

Therefore, as follows from \cite[pp. 217]{nickl} it holds that,
\begin{align}\label{eq:P_hat_m_vc}
\E[\sup_{\substack{m \in 1,\dots,k\\(u,v) \in \X^2}} |P^{\dagger}_{m,N}((u,v)) - P_m((u,v))|] 
&\leq \sqrt{\frac{8\log_2(|\X| k)\log N}{N}}
\end{align}
By \eqref{eq:P_hat_0_vc} and \eqref{eq:P_hat_m_vc} we have,
\begin{align}
&2\E[\sup_{m \in 1,\dots,k}|\hat{\beta}^{\dagger}_{N}(m)-\beta(m)|]\nonumber \\
&\leq \E[ \sup_{m \in 1,\dots k} \sum_{(u,v) \in \X^2}|P_m(\{(u,v)\}) - \hat{P}^{\dagger}_{m,N}(\{(u,v)\})| + 2 \sum_{u\in \X} |P_0(\{u\}) - \hat{P}^{\dagger}_{0,N}(\{u\})|]\nonumber \\
&\leq |\X|^2 \E[ \sup_{\substack{m \in 1,\dots, k\\(u,v) \in \X^2}}  |P_m(\{(u,v)\}) - \hat{P}^{\dagger}_{m,N}(\{(u,v)\})|] + 2 \sum_{u\in \X} \E|P_0(\{u\}) - \hat{P}^{\dagger}_{0,N}(\{u\})|\nonumber \\
& \leq |\X|^2 \sqrt{\frac{8\log_2(|\X| k)\log N}{N}} + 2|\X|N^{-1/2}\nonumber \\
&\leq 2|\X|^2 \sqrt{\frac{8\log_2(|\X| k)\log N}{N}}\label{eq:beta_star_beta_vc}
\end{align}
Take  $n \geq \frac{2k(k+1)}{3k+2}$. From  the coupling argument given earlier we obtain,
\begin{align}
\E[\sup_{m \in 1,\dots,k}|\hat{\beta}_{N}(m)-\beta(m)|] 
&\leq 
\E[\sup_{m \in 1,\dots,k}|\hat{\beta}^{\dagger}_{N}(m)-\beta(m)|]+N\beta(k)\nonumber \\
&\leq |\X|^2\sqrt{\frac{8\log_2(|\X| k)\log N}{N}}+N\eta e^{-\gamma k} \label{eq:vclem1} \\
&\leq |\X|^2\sqrt{\frac{8}{N}}\log_2(N|\X|) + N\eta e^{-\gamma k}\label{eq:vclem2} \\
&\leq 4|\X|^2 \sqrt{\frac{2k}{n}}\log_2 (n|\X|)+n\eta e^{-\gamma k}\label{eq:vclem3}\\
&\leq 4|\X|^2 \sqrt{\frac{2}{n}} \log_2(n|\X|) k+n\eta e^{-\gamma k}
\label{eq:bound_vc}
\end{align}
where \eqref{eq:vclem1} follows from \eqref{eq:beta_star_beta_vc}, and \eqref{eq:vclem2} follows from noting that $N = N(k,n) \geq k$ for $n \geq \frac{2k(k+1)}{3k+2}$; similarly, \eqref{eq:vclem3} follows form the choice of $N=N(k,n) \leq \frac{n-k}{2(k+1)}-1$ and noting that $k \geq 1$. 
Optimizing \eqref{eq:bound_vc} we have
\begin{align}\label{eq:kstar2_vc}
k^{\dagger} = \left \lfloor \frac{1}{\gamma}\log\left( \frac{\eta \gamma n^{3/2}}{4\sqrt{2}|\X|^2\log_2(n|\X|)}\right)\right \rfloor 
\end{align}
with 
\begin{align}\label{eq:goodnvc}
n \geq \max \left \{\frac{4\sqrt{2}|\X|^3 e^\gamma}{\eta \gamma}, \frac{2k(k+1)}{3k+2}\right \}.
\end{align}  This leads to
\begin{align}
\E[\sup_{m \in 1,\dots,k}&|\hat{\beta}_{N}(m)-\beta(m)|]\nonumber \\ 
&\leq \frac{4\sqrt{2}|\X|^2 n^{-1/2} \log_2 (n|\X|)}{\gamma}\left(e+\log\left(\frac{\eta \gamma n^{3/2}}{4\sqrt{2}|\X|^2\log_2(n|\X|)}\right)\right).
\end{align}
Take $N=N(k^{\dagger},n) = \lfloor \frac{n-k^{\dagger}}{2(k^{\dagger}+1)} \rfloor$, with $k^{\dagger}$ given by \eqref{eq:kstar2_vc}.
It follows that,
\begin{align}\label{eq:Nbetakvc}
N\eta e^{-\gamma k^{\dagger }} 
\leq \frac{2e\sqrt{2}|\X|^2 \log_2(n|\X|)n^{-1/2}}{\log(\frac{\eta \gamma n^{3/2}}{4\sqrt{2}|\X|^2 \log_2(n|\X|)})}
\end{align}
On the other hand, 
by Hoeffding's inequality, for any $\epsilon >0$ and $u \in \X$ it holds that
\begin{align}\label{eq:hoeff_vc}
\Pr(|\hat{P}^{\dagger}_{0,N}(\{u\}) - P_0(\{u\})| \geq \frac{\epsilon}{2|\X|})
& \leq 2\exp\left \{-\frac{N\epsilon^2}{2|\X|^2}\right \}
\end{align}
Furthermore, noting that $\mathcal H_{k^{\dagger}}$ is a VC-class, by  \cite[Theorem 12.5]{devroye2013probabilistic} for $\epsilon >0$ 
we have, \begin{align}\label{eq:vc}
\Pr(\sup_{\substack{m\in 1,\dots,k^{\dagger}\\ (u,v) \in \X^2}}
|\hat{P}^{\dagger}_{m,N}(\{(u,v)\}) - P_m(\{(u,v)\})| \geq \frac{\epsilon}{2|\X|^2})
 \leq 8\log_2(|\X|k)e^{-\frac{N\epsilon^2}{128|\X|^4}}
\end{align}
Furthermore,  noting that in general all $k$ are taken to be less than  $\lfloor n/8\rfloor $, a minor manipulation gives, 
\begin{align}
    N
    &\geq \frac{n-k}{2(k+1)}-1\nonumber \\
    &\geq \frac{n-4k}{4k} &\text{since $k\geq 2$}\nonumber \\
    &\geq \frac{n}{8k}\label{eq:vclemN}
\end{align}
where \eqref{eq:vclemN} is from noting that $k \leq \lfloor \frac{n}{8}\rfloor$. 
Therefore, for any $\epsilon >0$ we have,
\begin{align}
\Pr(\sup_{m \in 1,\dots,k^{\dagger}}&|\hat{\beta}^{\dagger}_{N}(m)-\beta(m)|  \geq \epsilon/2) \nonumber \\
&\leq |\X|^2 \Pr\left(\sup_{\substack{m \in 1,\dots,k^{\dagger}\\(u,v)\in \X^2}}|P_m(\{(u,v)\}) - \hat{P}^{\dagger}_{m,N}(\{(u,v)\})| \geq \frac{\epsilon}{2|\X|^2}\right ) \nonumber \\
&\qquad\qquad\qquad+
2 \sum_{u}\Pr\left (|P_0(\{v\}) - \hat{P}^{\dagger}_{0,N}(\{u\})| \geq \frac{\epsilon}{2|\X|}\right )\nonumber \\
&\leq 8|\X|^2 \log(|\X|k^{\dagger}) \exp\left \{-\frac{N\epsilon^2}{128|\X|^4}\right \}+2|\X|\exp\left \{-\frac{N\epsilon^2}{2|\X|^2}\right \} \label{eq:indep_bound_vc_1}\\
&\leq  16|\X|^2 \log(|\X|k^{\dagger}) \exp\left \{-\frac{N\epsilon^2}{128|\X|^4}\right \}\nonumber\\
&\leq 16|\X|^2 \log(|\X|k^{\dagger}) \exp\{-\frac{n\epsilon^2}{1024 k^{\dagger}|\X|^4}\}\label{eq:indep_bound_vc_4}\\
&\leq 16|\X|^2 \log\left(\frac{3|\X|}{2\gamma}\log(\eta^{2/3} \gamma^{2/3} n)\right)\exp\left\{-\frac{\gamma n\epsilon^2}{3072|\X|^4\log n}\right\}\label{eq:indep_bound_vc}
\end{align}
where, \eqref{eq:indep_bound_vc_1} follows from \eqref{eq:hoeff_vc} and \eqref{eq:vc}, \eqref{eq:indep_bound_vc_4} follows from 
the choice \eqref{eq:vclemN}, and \eqref{eq:indep_bound_vc} follows from substituting for the value of $k^{\dagger}$ as given by \eqref{eq:kstar2_vc} and observing that by \eqref{eq:goodnvc} we have $
{ \frac{n^{3/2}}{\log(n|\X|)}} \geq { \frac{n}{|\X|}}\geq \frac{\eta \gamma }{8\sqrt{2}|\X|^2}$.
Hence, by \eqref{eq:ourbdvc}, \eqref{eq:Nbetakvc} and \eqref{eq:indep_bound_vc} we obtain,
\begin{align*}
\Pr(&\sup_{m=1,\dots,k^{\dagger}}|\hat{\beta}_{N}(m)-\beta(m)| \geq \epsilon ) \nonumber\\
&\leq 
N\eta e^{-\gamma k^{\dagger}} + 
\Pr(\sup_{m=1,\dots,k^{\dagger}}|\hat{\beta}^{\dagger}_{N}(m)-\beta(m)| \geq \epsilon/2 ) \\
&\leq  \frac{4\sqrt{2}|\X|^2 \log(n|\X|)n^{-1/2}}{\log(\frac{\eta \gamma n^{3/2}}{8\sqrt{2}|\X|^2})}+16|\X|^2 \log\left(\frac{3|\X|}{2\gamma}\log(\eta^{2/3} \gamma^{2/3} n)\right)\exp\left\{-\frac{\gamma n\epsilon^2}{3072|\X|^4\log n}\right\}
\end{align*}
and the result follows. 
\end{proof}



\begin{thebibliography}{10}
\providecommand{\url}[1]{#1}
\csname url@samestyle\endcsname
\providecommand{\newblock}{\relax}
\providecommand{\bibinfo}[2]{#2}
\providecommand{\BIBentrySTDinterwordspacing}{\spaceskip=0pt\relax}
\providecommand{\BIBentryALTinterwordstretchfactor}{4}
\providecommand{\BIBentryALTinterwordspacing}{\spaceskip=\fontdimen2\font plus
\BIBentryALTinterwordstretchfactor\fontdimen3\font minus
  \fontdimen4\font\relax}
\providecommand{\BIBforeignlanguage}[2]{{%
\expandafter\ifx\csname l@#1\endcsname\relax
\typeout{** WARNING: IEEEtran.bst: No hyphenation pattern has been}%
\typeout{** loaded for the language `#1'. Using the pattern for}%
\typeout{** the default language instead.}%
\else
\language=\csname l@#1\endcsname
\fi
#2}}
\providecommand{\BIBdecl}{\relax}
\BIBdecl

\bibitem{IBR62}
I.~A. Ibragimov, ``Some limit theorems for stationary processes,'' \emph{Theory
  of Probability \& its Applications}, vol.~7, no.~4, pp. 349--382, 1962.

\bibitem{viennet1997inequalities}
G.~Viennet, ``Inequalities for absolutely regular sequences: application to
  density estimation,'' \emph{Probability theory and related fields}, vol. 107,
  pp. 467--492, 1997.

\bibitem{RIO99}
E.~Rio, \emph{Th{\'e}orie asymptotique des processus al{\'e}atoires faiblement
  d{\'e}pendants}.\hskip 1em plus 0.5em minus 0.4em\relax Springer, 1999.

\bibitem{SAM00}
P.~M. Samson, ``Concentration of measure inequalities for markov chains and
  $\phi$-mixing processes,'' \emph{The Annals of Probability}, 2000.

\bibitem{RIO2000905}
E.~Rio, ``Inégalités de hoeffding pour les fonctions lipschitziennes de
  suites dépendantes,'' \emph{Comptes Rendus de l'Académie des Sciences -
  Series I - Mathematics}, vol. 330, no.~10, pp. 905--908, 2000.

\bibitem{BER06}
P.~Bertail, P.~Doukhan, and P.~Soulier, \emph{Dependence in probability and
  statistics}.\hskip 1em plus 0.5em minus 0.4em\relax Springer, 2006.

\bibitem{bradley2007introduction}
R.~C. Bradley, \emph{Introduction to Strong Mixing Conditions}.\hskip 1em plus
  0.5em minus 0.4em\relax Kendrick Press, 2007, vol. 1,2,~3.

\bibitem{kontorovich2008concentration}
A.~Kontorovich and K.~Ramanan, ``{Concentration inequalities for dependent
  random variables via the martingale method},'' \emph{The Annals of
  Probability}, vol.~36, no.~6, pp. 2126 -- 2158, 2008.

\bibitem{BOSQ12}
D.~Bosq, \emph{Nonparametric statistics for stochastic processes: estimation
  and prediction}.\hskip 1em plus 0.5em minus 0.4em\relax Springer Science \&
  Business Media, 2012, vol. 110.

\bibitem{NOB06}
A.~B. Nobel, ``Hypothesis testing for families of ergodic processes,''
  \emph{Bernoulli}, vol.~12, no.~2, pp. 251--269, 2006.

\bibitem{KL23}
A.~Khaleghi and G.~Lugosi, ``Inferring the mixing properties of a stationary
  ergodic process from a single sample-path,'' \emph{IEEE Transactions on
  Information Theory}, vol.~69, no.~6, pp. 4014--4026, 2023.

\bibitem{sheilds96}
P.~Shields, \emph{The Ergodic Theory of Discrete Sample Paths}.\hskip 1em plus
  0.5em minus 0.4em\relax AMS Bookstore, 1996.

\bibitem{MCD15}
D.~McDonald, C.~Shalizi, and M.~Schervish, ``Estimating beta-mixing
  coefficients via histograms,'' \emph{Electronic Journal of Statistics},
  vol.~9, no.~2, pp. 2855--2883, 2015.

\bibitem{nickl}
E.~Gin{\'e} and R.~Nickl, \emph{Mathematical Foundations of
  Infinite-Dimensional Statistical Models}, ser. Cambridge Series in
  Statistical and Probabilistic Mathematics.\hskip 1em plus 0.5em minus
  0.4em\relax Cambridge University Press, 2021.

\bibitem{Peres19}
D.~Hsu., A.~Kontorovich, D.~A. Levin, Y.~Peres, C.~Szepesv{\'a}ri, and
  G.~Wolfer, ``{Mixing time estimation in reversible Markov chains from a
  single sample path},'' \emph{The Annals of Applied Probability}, vol.~29,
  no.~4, pp. 2439 -- 2480, 2019.

\bibitem{Kont19}
G.~Wolfer and A.~Kontorovich, ``Estimating the mixing time of ergodic markov
  chains,'' in \emph{Proceedings of the Thirty-Second Conference on Learning
  Theory}, ser. Proceedings of Machine Learning Research, vol.~99, 25--28 Jun
  2019, pp. 3120--3159.

\bibitem{Wolfer20}
G.~Wolfer, ``Mixing time estimation in ergodic markov chains from a single
  trajectory with contraction methods,'' in \emph{Proceedings of the 31st
  International Conference on Algorithmic Learning Theory}, ser. Proceedings of
  Machine Learning Research, vol. 117, 2020, pp. 890--905.

\bibitem{wolfer2024optimistic}
G.~Wolfer and P.~Alquier, ``Optimistic estimation of convergence in markov
  chains with the average-mixing time,'' \emph{arXiv preprint
  arXiv:2402.10506}, 2024.

\bibitem{bennett1988interpolation}
C.~Bennett and R.~Sharpley, \emph{Interpolation of Operators}.\hskip 1em plus
  0.5em minus 0.4em\relax Elsevier Science, 1988.

\bibitem{SHUM25}
R.~H. Shumway and D.~S. Stoffer, \emph{Time Series Analysis and Its
  Applications}, 4th~ed.\hskip 1em plus 0.5em minus 0.4em\relax Springer, 2025.

\bibitem{JAN97}
S.~Janson, \emph{Gaussian Hilbert Spaces}, ser. Cambridge Tracts in
  Mathematics.\hskip 1em plus 0.5em minus 0.4em\relax Cambridge University
  Press, 1997.

\bibitem{levental1988uniform}
S.~Levental, ``Uniform limit theorems for Harris recurrent Markov chains,''
  \emph{Probability theory and related fields}, vol.~80, no.~1, pp. 101--118,
  1988.

\bibitem{YU1994}
B.~Yu, ``{Rates of Convergence for Empirical Processes of Stationary Mixing
  Sequences},'' \emph{The Annals of Probability}, vol.~22, no.~1, pp. 94 --
  116, 1994.

\bibitem{arcones1994central}
M.~A. Arcones and B.~Yu, ``Central limit theorems for empirical and u-processes
  of stationary mixing sequences,'' \emph{Journal of Theoretical Probability},
  vol.~7, pp. 47--71, 1994.

\bibitem{berbee1979random}
H.~C. Berbee, ``Random walks with stationary increments and renewal theory,''
  \emph{MC Tracts}, 1979.

\bibitem{devroye2013probabilistic}
L.~Devroye, L.~Gy{\"o}rfi, and G.~Lugosi, \emph{A probabilistic theory of
  pattern recognition}.\hskip 1em plus 0.5em minus 0.4em\relax Springer Science
  \& Business Media, 2013, vol.~31.

\end{thebibliography}
\end{document}